\documentclass[12pt, reqno]{amsart}
\usepackage{amsmath, amsthm, amscd, amsfonts, amssymb, graphicx, color, mathrsfs}
\usepackage[bookmarksnumbered, colorlinks, plainpages]{hyperref}
\usepackage[all]{xy} 
\usepackage{slashed}
\usepackage{cite}
\usepackage{xcolor}
\usepackage{listings}
\usepackage{xcolor}
\usepackage{comment}

\newtheorem{theorem}{Theorem}[section]

\theoremstyle{definition}
\newtheorem{definition}[theorem]{Definition}

\theoremstyle{remark}
\newtheorem{remark}[theorem]{Remark}
\numberwithin{equation}{section}

\begin{document}
\setcounter{page}{1}

\color{darkgray}{
\noindent \centering
{\small   }\hfill    {\small }\\
{\small }\hfill  {\small }}

\centerline{}

\centerline{}


\title[Hyperbolic partial differential equations ]{Hyperbolic partial differential equations with complex characteristics on Fourier Lebesgue spaces}

\author[D. Cardona]{Duv\'an Cardona$^{1,*}$}
\address{
  Duv\'an Cardona:
  \endgraf
  Department of Mathematics: Analysis, Logic and Discrete Mathematics
  \endgraf
  Ghent University, Belgium
  \endgraf
  {\it E-mail address} {\rm duvanc306@gmail.com, duvan.cardonasanchez@ugent.be}
  }

\author[W.O. Denteh]{William Obeng-Denteh$^{2,*}$}
\address{
  William Obeng-Denteh
  \endgraf
  Department of Mathematics
  \endgraf
  Kwame Nkrumah University of Science and Technology, (KNUST)- Ghana.
  \endgraf
  {\it E-mail address} {\rm wobengdenteh@gmail.com}
  }

\author[F. Opoku]{Frederick Opoku$^{3,*}$}
\address{
  Frederick Opoku
  \endgraf
  Department of Mathematics
  \endgraf
  Kwame Nkrumah University of Science and Technology, (KNUST)- Ghana.
  \endgraf
  {\it E-mail address} {\rm frederick@aims.edu.gh}
  }

\thanks{Duv\'an Cardona has been  supported  by the FWO  Odysseus  1  grant  G.0H94.18N:  Analysis  and  Partial Differential Equations and by the Methusalem programme of the Ghent University Special Research Fund (BOF)
(Grant number 01M01021), by the FWO Fellowship
Grant No 1204824N and by the FWO Grant K183725N of the Belgian Research Foundation FWO. Frederick Opoku has been supported by Kwame Nkrumah University of Science and Technology, (KNUST) of Ghana.
}

\begin{abstract} The aim of this paper is to establish well-posedness properties  for hyperbolic PDEs on Fourier Lebesgue spaces. We consider hyperbolic operators with complex characteristics. Since our approach comes from harmonic analysis, we establish boundedness properties of Fourier integral operators with complex-valued phase functions on Fourier Lebesgue spaces, Besov spaces and Triebel-Lizorkin spaces. Indeed, these classes of operators serve as propagators of the considered PDE problems. In terms of the boundedness properties, we prove new results in the case where the canonical relation of the operator is assumed to satisfy the {\it spatial smooth factorization condition}. 
\newline
\newline
\noindent \textit{Keywords.} Hyperbolic operators, Fourier Lebesgue spaces, Besov spaces, Triebel-Lizorkin spaces,  complex  characteristics.
\newline
\noindent \textit{2020 Mathematics Subject Classification.} Primary 35S30; Secondary 42B37.
\end{abstract} \maketitle
\allowdisplaybreaks
\tableofcontents

\section{Introduction}
\subsubsection{Main results} Advancing in methods for the analysis of hyperbolic partial differential equations with complex characteristics, the aim of this work is to investigate the well-posedness of these models on Fourier Lebesgue spaces. Our results are extensions of the ones for $L^p$-Sobolev spaces obtained by  Ruzhansky \cite[Chapter 5]{Ruz2001}. See also Melin and Sj\"ostrand \cite{melin1976fourier} and Tr\`{e}ves \cite {treves1980introduction}.

Our approach coming from harmonic analysis will use the boundedness properties of Fourier integral operators with complex phases, which are the propagators or, solution operators for these equations. The general prototype will be hyperbolic PDEs associated with operators of the form (see e.g. Tr\`{e}ves \cite {treves1980introduction})
\begin{align}\label{eq:5.4.10:intro}
P = D_t^{\,m} + \sum_{j=1}^{m} P_j(t,x,D_x) D_t^{\,m-j},
\end{align}
on a smooth compact manifold $X$  of dimension $d.$ 

Of strong interest by themselves are Fourier integral operators with complex-valued phase functions \cite{melin1976fourier} which arise naturally in a wide range of analytical problems, particularly in the study of hyperbolic partial differential equations and microlocal analysis. In contrast to the classical theory of Fourier integral operators with real-valued phase functions, complex phases emerge in settings where the real-phase framework becomes inoperable, in particular, when one is interested in obtaining sharp boundedness results. A striking recent development appears in the analysis of the wave equation associated with Hörmander sub-Laplacians on Heisenberg-type and Métivier groups, as observed by Martini and Müller \cite{MMNG2023}, where the presence of complex-valued phases is intrinsic to the problem.

The main goal of this paper is the analysis of the following Cauchy problem for the hyperbolic operator $P$ as defined in \eqref{eq:5.4.10:intro}:
\begin{equation}\label{eq:5.4.12:intro}
\begin{cases}
Pv = 0, & t\in[0,T], \\
\partial_t^{\,l} v(0,x) = f_l(x), & 0\le l \le m-1 .
\end{cases}
\end{equation} 
Using the machinery of FIOs with complex phases, the boundedness results established in \cite{CardonaObengDentehOpoku} and the ones developed in this work in Theorem \ref{Main:Theorem},  we prove the following result.  We denote by $\mathcal{F}L^p_s$ the Fourier Lebesgue space with the norm $\|f\|_{\mathcal{F}L^p_s}=\|\langle\xi\rangle^s\hat{f}\|_{L^p}$, see Section \ref{Preliminaries}.
\begin{theorem}\label{XXX:intro}
Let $P$ be a classical pseudo-differential operator of order $m$ of the form\eqref{eq:5.4.10:intro}
on a smooth compact manifold $X$  of dimension $d.$ Assume that $P$ has simple characteristics $\tau_j,$ satisfying
$\operatorname{Im}\tau_j (t,x,\xi) \geq 0$ in $[0,T]\times (T^*X\setminus\{0\})$. 
\begin{itemize}
    \item Let $1\le p \le \infty$, and set
$ 
\alpha_p \geq d\left|\frac{1}{p}-\frac{1}{2}\right|.
$ 
Let $\alpha \in \mathbb{R}$. Let the functions $f_l \in \mathcal{F}L^{p}_{\alpha+\alpha_p-l}$ be compactly supported for all
$0\le l \le m-1$. Then, for a fixed $t$, the solution $v$ of the Cauchy problem
\eqref{eq:5.4.12:intro} satisfies
$ 
v(t,\cdot)\in \mathcal{F}L^{p}_{\alpha}.
$
\item Moreover, let $1\le q < p\le \infty,$ and define
$ 
\alpha_{pq}>d\bigg|\dfrac{1}{q}-\dfrac{1}{2}\bigg|+d\bigg(\dfrac{1}{q}-\dfrac{1}{p}\bigg).
$ 
Let the functions $f_l \in \mathcal{F}L^{p}_{\alpha+\alpha_{pq}-l}$ be compactly supported for all
$0\le l \le m-1$. 
Then, the fixed-time solution
$v(t,\cdot)$ of the Cauchy problem~\eqref{eq:5.4.12:intro} satisfies
$ 
v(t,\cdot)\in \mathcal{F}L^{q}_{\alpha}.
$ 
\end{itemize}
\end{theorem}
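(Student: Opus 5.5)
The plan is to follow the scheme of Ruzhansky \cite[Chapter 5]{Ruz2001} for $L^p$-Sobolev spaces, replacing the $L^p$ bounds for Fourier integral operators with complex phases by the Fourier Lebesgue bounds from \cite{CardonaObengDentehOpoku} and Theorem \ref{Main:Theorem}. First, since $P$ has simple characteristics $\tau_1,\dots,\tau_m$ with $\operatorname{Im}\tau_j\ge 0$, we factorize $P$ microlocally, modulo smoothing operators, as $P=(D_t-\tau_1(t,x,D_x))\cdots(D_t-\tau_m(t,x,D_x))+R$, with $R$ of order $-\infty$. We then reduce the Cauchy problem \eqref{eq:5.4.12:intro} to a first order $m\times m$ system, diagonalized modulo smoothing terms, in the standard way. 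Each scalar first order problem $(D_t-\tau_j)w=0$, $w(0)=g$, is solved by the method of complex phases of Melin--Sj\"ostrand \cite{melin1976fourier} and Tr\`eves \cite{treves1980introduction}. Concretely, the complex eikonal equation $\partial_t\phi_j=\tau_j(t,x,\nabla_x\phi_j)$, $\phi_j(0,x,\xi)=x\cdot\xi$, is solved in the almost-analytic sense. The condition $\operatorname{Im}\tau_j\ge0$ guarantees $\operatorname{Im}\phi_j\ge 0$, so we obtain a phase of positive type. The transport equations then give an amplitude $a_j\in S^0_{1,0}$.

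This yields the representation
\[
v(t,x)=\sum_{j=1}^m\sum_{l=0}^{m-1} A_{jl}(t)f_l(x)+(\text{smoothing terms applied to } f_l),
\]
where each $A_{jl}(t)$ is a Fourier integral operator with the complex phase $\phi_j(t,\cdot,\cdot)$ and an amplitude of order $-l$. The amplitude has order $-l$ because the $l$-th datum enters through the inverse of a Vandermonde-type matrix in the $\tau_j$'s, which is homogeneous of degree $-l$. Since the $f_l$ are compactly supported, we may insert cut-offs and work in local charts. There the operators are of the form studied in the earlier results.

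Next we write $A_{jl}(t)=A^0_{jl}(t)\circ\langle D\rangle^{-l}$, modulo lower order terms, where $A^0_{jl}$ has order $0$. For the first item, we apply the Fourier Lebesgue bound, that is, the boundedness $\mathcal{F}L^p_{\alpha+\alpha_p}\to\mathcal{F}L^p_{\alpha}$ of complex-phase FIOs of order zero for every $\alpha_p\ge d|1/p-1/2|$. Combined with $\langle D\rangle^{-l}:\mathcal{F}L^p_{\alpha+\alpha_p-l}\to\mathcal{F}L^p_{\alpha+\alpha_p}$, this gives $A_{jl}(t)f_l\in\mathcal{F}L^p_\alpha$. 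For the second item, we use the $\mathcal{F}L^p\to\mathcal{F}L^q$ estimate of Theorem \ref{Main:Theorem}, valid for $q<p$ with loss $\alpha_{pq}$. Its extra term $d(1/q-1/p)$ is what Hölder's inequality costs when passing from $L^p$ to $L^q$ on the Fourier side, and it must be strict for the weight $\langle\xi\rangle^{-\varepsilon}$ to lie in $L^r$ with $1/r=1/q-1/p$. The smoothing remainders map compactly supported distributions in any $\mathcal{F}L^p_s$ into $C_c^\infty\subset\mathcal{F}L^q_\alpha$, so they are harmless. Summing over $j$ and $l$ gives the claim.

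The main obstacle is verifying that the propagators fall within the hypotheses of the FIO theorems. This includes non-degeneracy of the complex phase and the spatial smooth factorization condition on the canonical relation when Theorem \ref{Main:Theorem} is invoked. It also requires that these properties hold uniformly in $t\in[0,T]$, including near $t=0$, where $\phi_j$ degenerates to $x\cdot\xi$. My first attempt would be to check directly that $\nabla_\xi\phi_j$ has a smooth factorization for small $t$ by a perturbation of the identity. For general $t$, I would use the group property, composing finitely many short-time propagators, and use the fact that composition preserves the relevant classes.
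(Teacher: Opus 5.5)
Your argument has the same structure as the paper's. You factor $P=L_m\cdots L_1+R$ and represent $v(t,\cdot)=\sum_{j,l}T^{jl}_tf_l$ with complex-phase FIOs of order $-l$, the order $-l$ coming from the Vandermonde inversion. You then move the Sobolev weights onto the operator with Bessel potentials. Finally you apply the $\mathcal{F}L^p$ bound of \cite{CardonaObengDentehOpoku} (with $\varkappa\le d$) for the first item and part 1 of Theorem~\ref{Main:Theorem} for the second.

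The one step where you take a different route is the derivation of the representation formula. You diagonalize the equivalent first-order system modulo smoothing, so each $A_{jl}(t)$ carries the single phase $\phi_j$. The paper instead makes the ansatz $v=\sum_j E_j(t)g_j$ with equal-time compositions $E_j(t)=U_j(t)\circ\cdots\circ U_1(t)$ and then solves a Vandermonde system.

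Your route is the standard one, and it is the one that actually works. Take $D_t^2-D_x^2=(D_t+D_x)(D_t-D_x)$, so $\tau_1=\xi$ and $\tau_2=-\xi$. Then $E_2(t)=U_2(t)U_1(t)=I$, and $E_2(t)g_2=g_2$ does not solve the equation. Moreover, the matrix $\bigl((i\tau_j)^l\bigr)$ records $\partial_t^lU_j(0)$ for single propagators, not $\partial_t^lE_j(0)$; for instance $\partial_tE_2(0)$ has symbol $i\tau_1+i\tau_2$. So your version repairs this step. Your H\"older explanation of the $d(1/q-1/p)$ term also covers $p=\infty$, which Theorem~\ref{Main:Theorem}(1) formally excludes.

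On the obstacle you flag, the paper gives no argument; it simply asserts that the conjugated $T^{jl}_t$ satisfy the hypotheses of \cite{CardonaObengDentehOpoku}. Two points should simplify your plan.

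First, for the loss $d|1/p-1/2|$ you only need rank $\varkappa=d$. At that rank the fibration condition in Definition~\ref{SFC} is vacuous: the fibres are points, and the condition concerns $\nabla_x\Phi$, not $\nabla_\xi\phi_j$. So only non-degeneracy of $\operatorname{Re}\Phi+\tau\operatorname{Im}\Phi$ has to be checked. Near $t=0$ the phase is close to $x\cdot\xi$, so this is the easy regime rather than a degenerate one; the real issue is larger $t$.

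Second, if you compose short-time propagators there, you must show that the composite is again a single order-zero operator of the required non-degenerate form. That step may use only microlocal partitions and linear changes of variables, since nonlinear changes of variables themselves lose derivatives on $\mathcal{F}L^p$. Composing the $\mathcal{F}L^p$ estimates of the individual factors instead would accumulate the loss $\alpha_p$ once per factor.
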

\subsubsection{Sharpness and regularity of the solution in Besov and in Triebel-Lizorkin spaces} For $L^p$-Sobolev spaces, to our knowledge, Theorem \ref{XXX:intro} has been established by Ruzhansky \cite[Chapter 5]{Ruz2001} with a sharp loss of regularity $$\alpha_p = (d-1)\left|\frac{1}{p}-\frac{1}{2}\right|,$$ with a similar sharp-endpoint loss of regularity in the $L^p$-$L^q$-setting. We also provide information about the membership of the solution $v(t,\cdot)$ to Besov and on Triebel-Lizorking spaces to get information about the regularity of the solution, see Remark \ref{Fredferick:Remark}. Also, in view of the results in Nicola \cite{Nicola2010} the loss of regularity $$\alpha_p = d\left|\frac{1}{p}-\frac{1}{2}\right|,$$ in Theorem \ref{XXX:intro} is sharp.
\subsubsection{Methodology and bibliographic discussion}
As mentioned before, propagators to the class of the hyperbolic PDEs in Theorem \ref{XXX:intro} are Fourier integral operators with complex phases.  The extension of the theory of Fourier integral operators due to H\"ormander and Duistermaat to the case of complex-valued phase function was systematically developed by Melin and Sj\"ostrand in \cite{melin2006fourier} motivated by the construction of parametrices, or fundamental solutions for operators of principal type with non-real principal symbols, see for instance \cite{melin1976fourier}. The Fourier integral operators in the sense of H\"ormander \cite{hormander1963linear,treves1980introduction}, in a simplified local version, are operators of the form 
\begin{align}
    Tf(x)=\int e^{2\pi i \Phi(x,\eta)}\sigma(x,\eta)\mathscr{F}f(\eta)\,d\eta.\label{q}
\end{align}
The phase function $\Phi(x,\eta)$ in (\ref{q}) is assumed real-valued, smooth on $\eta\neq 0$, and positively homogeneous of degree $1$ in $\eta\neq0$. Moreover, the symbol $\sigma:=\sigma(x,\eta)$ belongs to the H\"ormander class $S^m_{1,0}$ of order $m\in \mathbb{R}$, that is, it satisfies that, \[ \forall (x,\eta),\quad|\partial^{\alpha}_{\eta}\partial^{\beta}_x\sigma(x,\eta)| \lesssim \langle \eta \rangle^{m-|\alpha|}, \] where $\langle \eta \rangle=(1+|\eta|^2)^{1/2}.$ Boundedness in $L^2(\mathbb{R}^d)$ and on $L^p(\mathbb{R}^d)$ spaces of these operators have been widely studied, see e.g in \cite{sogge2017fourier} and in references therein. As a basic results, we know that under non-degeneracy condition 
\begin{align}\label{q11}
   \bigg| \operatorname{det}\bigg(\dfrac{\partial^2\Phi}{\partial_x{_i}\partial_\eta{_i}}\bigg|_{(x,\eta)}\bigg)\bigg|>\delta>0, \forall (x,\eta)\in \mathbb{R}^{2d},
\end{align}
the operator $T$ is $L^2-$bounded for $m=0,$ see \cite{hormander1971fourier}. Also, they are $L^p-$bounded, if $1<p<\infty,$ and if the order $m$ of $\sigma(x,\eta)$ is negative and it satisfies 
\begin{align}\label{q1v}m\le -(d-1)\bigg|\dfrac{1}{2}-\dfrac{1}{p}\bigg|,\end{align} see \cite{seeger1991regularity}. In \cite{Nicola2010}, the author has studied the action of an operator $T$ as above, on the spaces $\mathcal{F}L^p$ of tempered distributions whose Fourier transform are in $L^p$  with the norm $\|f\|_{\mathcal{F}L^p}=\|\hat{f}\|_{L^p}$. There, it was shown that $T$ is bounded as an operator $\mathcal{F}L^p\longrightarrow \mathcal{F}L^p,$ $1\le p\le \infty$, if $m\le -d \bigg|\dfrac{1}{2}-\dfrac{1}{p}\bigg|$ in the non-degenerate case.  This is similar to (\ref{q1v}), but with the difference of one unit in the dimension. Surprisingly, this threshold was shown to be sharp in any dimension $d\ge 1,$ even for the phases linear with respect to $\eta$; see \cite{cordero2009boundedness}. We also remark that we have generalised the result due to \cite{Nicola2010}, to the setting of complex phases. In this paper we present the boundedness of Fourier integral operators $T$ on Besov and Triebel--Lizorkin spaces under the assumption that the associated canonical relation satisfies the spatial smooth factorisation condition in \cite{CardonaObengDentehOpoku}, and also use it to establish the boundedness on Fourier--Lebesgue spaces for different choices of the Lebesgue indicies $p$ and $q.$
 This is a continuation of our study \cite{CardonaObengDentehOpoku}, where we dealt with boundedness of Fourier integral operators with complex-valued phase on Fourier Lebesgue spaces. Recently, in \cite{Lu} the author has established sharp embedding properties of Fourier--Lebesgue spaces. Motivated by this result, we investigate the boundedness of Fourier integral operators $T$ on Besov and Triebel--Lizorkin spaces by suitably modifying the arguments developed in \cite{CardonaObengDentehOpoku}. Our approach relies on the embedding results into Fourier--Lebesgue spaces obtained in \cite{Lu}, which hold for a wide range of indices and, in particular, allow $0<r\le\infty$. The methods and techniques developed in this paper combine ideas originating from the work of Ruzhansky \cite{Ruz2001}, Laptev, Safarov, and Vassiliev \cite{Laptev:Safarov:Vassiliev}, Rodino, Nicola, and Cordero \cite{CNR2009}, Nicola \cite{Nicola2010}, as well as from joint work of the first author with Ruzhansky. These tools are adapted and refined to address the boundedness of Fourier integral operators with complex-valued phase functions under the spatial smooth factorization condition.
\subsubsection{Structure of the paper} 
This paper is organised as follows. 
\begin{itemize}
    \item In Section \ref{Preliminaries}, we collect the basic definitions and notations that will be used throughout the paper, and we introduce the Kohn--Nirenberg symbol classes of type $(1,0)$, which play a central role in the estimation of symbols. 
    \item In Section \ref{Hype:PDE}, we investigate the boundedness properties of Fourier integral operators with complex phases and we investigate the well posedness of hyperbolic partial differential equations in those spaces of functions of interest in this manuscript.
\end{itemize}

\section{Preliminaries}\label{Preliminaries}
\subsubsection{Notations} For non–negative quantities $A$ and $B$, the notation $A\lesssim B,$ means that
there exists a constant $C>0$, independent of the relevant parameters, such that
$A\le CB$. We write $\mathscr{F}$ to represent the Fourier transform, defined for suitable functions and put $\mathscr{F}^{-1}$ for its inverse Fourier transform. We denote by $\mathscr{F}L^p$ the Fourier Lebesgue space with the norm $\|f\|_{\mathcal{F}L^p}=\|\hat{f}\|_{L^p}$. We denote $r'$ as the quantity such that $ \dfrac{1}{r}+\dfrac{1}{r'}=1$; denote $r'=\infty$ for $r'=\dfrac{r}{r-1}$ .  We denote by $X\hookrightarrow Y$ if $X\subseteq Y$ and the inclusion map is continuous. 
\subsection{Besov spaces $B^{s}_{p,q}$ and Triebel-Lizorkin Spaces $F^{s}_{p,q}$} Let $0<p,q\le \infty, s\in \mathbb{R}.$ Let $\psi:\mathbb{R}^d\longrightarrow [0,1]$ be a cut-off function supported on the ball $B(0,2)=\{|\xi|<2\}$. It satisfies that $\psi(\xi)=1$ for $|\xi|\le 1$ and $\psi(\xi)=0$ for $|\xi|\ge 2$. We denote $\varphi(\xi)=\psi(\xi)-\psi(2\xi),$ and $\varphi_j(\xi)=\varphi(2^{-j}\xi)$ for $1\le j, j\in \mathbb{Z}, \varphi_0(\xi)=1-\sum_{j\ge 1}\varphi_j(\xi).$ Denote $\Delta_j:=\mathscr{F}^{-1}\varphi_j\mathscr{F}$. We say that $\{\Delta_j\}_{j\ge 0}$ are the dyadic decomposition operators. The Besov spaces $B^s_{p,q}$ and the Triebel spaces $F^s_{p,q}(p<\infty)$ are defined as follows:\\
\begin{align}
    B^s_{p,q}(\mathbb{R}^d)&=\bigg\{f\in \mathcal{S}^{'}(\mathbb{R}^d):\|f\|_{B^s_{p,q}(\mathbb{R}^d)}=\bigg\|2^{js}\|\Delta_jf\|_{L^p_x}\bigg\|_{\ell^q_{j\ge 0}(\mathbb{N}_0)} <\infty\bigg\},\\
    F^s_{p,q}(\mathbb{R}^d)&=\bigg\{f\in \mathcal{S}^{'}(\mathbb{R}^d):\|f\|_{F^s_{p,q}(\mathbb{R}^d)}=\bigg\|2^{js}\|\Delta_jf\|_{\ell^q_{j\ge 0}}\bigg\|_{L^p_x(\mathbb{R}^d)} <\infty\bigg\}.
\end{align}
The spaces $B^s_{p,q}(\mathbb{R}^d)$ and $F^s_{p,q}(\mathbb{R}^d)$
are independent of the particular choice of the smooth dyadic decomposition of unity appearing in their definition. They are quasi-Banach spaces (Banach spaces for $p,q  \ge 1)$, and $\mathcal{S}(\mathbb{R}^d)\hookrightarrow B^s_{p,q}(\mathbb{R}^d)\hookrightarrow \mathcal{S'}(\mathbb{R}^d)$
where the first embedding is dense if $0<p,q<\infty$.
There is a parallel approach when interchanging in the above norm the $L^p$
and $\ell^q$ norm, this leads to the scale of Triebel–Lizorkin spaces $F^s_{p,q}(\mathbb{R}^d)$, see \cite{triebel2012structure}.

\subsection{Fourier Integral Operators (FIOs)} 
Let $\mathcal{S}(\mathbb{R}^d)$ be the
Schwartz space of all complex-valued rapidly decreasing infinitely differentiable functions on $\mathbb{R}^d$ . By $\mathcal{S'}(\mathbb{R}^d)$ we denote 
the space of all tempered distributions on $\mathbb{R}^d.$  
 The Fourier transform and its inversion formula of $f\in\mathcal{S}(\mathbb{R}^d)$ are defined by
\begin{align}
 \mathscr{F}f(\eta)&=\int_{\mathbb{R}^d} e^{-2\pi i x\cdot\eta} f(x)\,dx;\\
 \mathscr{F}^{-1}f(x)&=\int_{\mathbb{R}^d} e^{2\pi i x\cdot\eta} \mathscr{F}f(\eta)\,d\eta,
\end{align}
and extended to $\mathcal{S}'(\mathbb{R}^d)$ by duality.
For $1\le p\le\infty,$ and $s\in\mathbb{R}$, the Fourier--Lebesgue space
$\mathcal{F}L^p_s(\mathbb{R}^d)$ consists of all tempered distributions
$f\in\mathcal{S}'(\mathbb{R}^d)$ such that
\[
\|f\|_{\mathcal{F}L^p_s}
=
\|\langle\eta\rangle^s \mathscr{F}f(\eta)\|_{L^p(\mathbb{R}^d)}<\infty,
\quad
\langle\eta\rangle=(1+|\eta|^2)^{1/2}.
\]
Let $\sigma:=\sigma(x,\eta)\in S^m_{1,0}$ be a
symbol compactly supported in the $x$ variable and let  $\Phi$ to be a phase function of positive type. These classes of functions are defined by the inequalities (\ref{INQ}) below. The Fourier integral operator $T$ associated with $\Phi$ and $\sigma$ is
 defined by
\begin{equation}
  Tf(x)
=
\int_{\mathbb{R}^d} e^{2\pi i \Phi(x,y,\eta)}\sigma(x,\eta)\mathscr{F}f(\eta)\,d\eta.  
\end{equation}
Under the standard non–degeneracy assumptions on $\Phi$, the operator $T$ defines an operator mapping $C_c^\infty(Y)$ into $\mathcal{D}'(X)$ and extends by density
to various function spaces.
We analyse the boundedness of $T$ for the symbol $\sigma:=\sigma(x,\eta)\in C^{\infty}(\mathbb{R}^{2d})$ in the symbol class $S^m_{1,0}$ with $m\in \mathbb{R}$ which satisfy the following estimate
\begin{align}\label{INQ}|\partial^{\alpha}_{\eta}\partial^{\beta}_x\sigma(x,\eta)| \lesssim \langle \eta \rangle^{m-|\alpha|},\quad \forall (x,\eta)\in \mathbb{R}^{2d}, \quad\text{ where } \langle \eta \rangle=(1+|\eta|^2)^{1/2}.\end{align}
In order to introduce Fourier integral operators with complex phases, we require  $\Phi$ to be a phase function of positive type; this means that:
    \begin{itemize}
        \item  $\operatorname{Im} \Phi(x,y,\eta)\ge 0$,
        \item $\Phi(x,y,\eta)$ has no critical points on its domain: $\partial_{\theta} \Phi(x,y,\eta)\neq 0.$
        \item $\Phi(x,y,\eta)$ is homogeneous of degree one in $\eta$ i.e, $\forall \lambda>0, \Phi(x,y,\lambda\eta)=\lambda\Phi(x,y,\eta).$
        \item $\{\partial_\eta\Phi(x,y,\eta)=0\}$ is smooth. This means that if $\partial_\eta\Phi(x,y,\eta)=0,$ then the  vectors $d\frac{\partial\Phi}{\partial\eta_j}$ are linearly independent over $\mathbb{C}.$
    \end{itemize}
    The associated canonical relation $\mathcal{C}\subset \widetilde{T^*X\times T^*Y}$ is defined by
\[
\mathcal{C}
=
\{(x,\nabla_x\Phi(x,y,\eta),\,y,-\nabla_y\Phi(x,y,\eta)) :
\nabla_\eta\Phi(x,y,\eta)=0\},
\] where $\widetilde{M}$ denotes the almost analytic continuation of a real smooth manifold $M,$ see \cite{Melin-Sjostrand1976}. It is important to mention that the analytical properties of a Fourier integral operator are encode by the geometric properties of its canonical relation. To establish Fourier--Lebesgue boundedness for Fourier integral operators with complex phases, we adopt the \emph{spatial smooth factorization condition} (SSFC); for details, we refer the reader to \cite{Nicola2010}.
\begin{definition}[Spatial smooth factorization condition (SSFC)]\label{SFC}
We assume that there exists an integer $0\le \varkappa\le d$ such that, for some 
$\tau \in \mathbb{R},$ and for every $(x_0,\eta_0)\in\Lambda_\tau$ with
$\eta_0\in\mathbb{S}^{d-1}$, there exists an open neighbourhood $\Omega$ of $x_0$
and an open neighbourhood $\Gamma'\subset\mathbb{S}^{d-1}$ of $\eta_0$, with
$\Omega\times\Gamma'\subset\Lambda_\tau$, satisfying the following properties.
\begin{itemize}
    \item {{(SSFC)}} For every $\eta\in\Gamma'$ there exists a smooth fibration of $\Omega$,
depending smoothly on $\eta$, whose fibres are affine subspaces of codimension
$\varkappa$, such that
\[
\nabla_x\Phi_\tau(\cdot,\eta)
\quad\text{is constant along each fibre}.
\]
\item In addition, we also  assume that $\Phi_\tau$ holds the non-degeneracy condition,
$$\operatorname{det}\partial_y\partial_{\eta}(\operatorname{Re}\Phi(y,\eta)+\tau\operatorname{Im}\Phi(y,\eta))\neq 0.$$ 
\end{itemize}
To simplify the terminology, if a phase function $\Phi$ can be associated with a phase function $\Phi_\tau$ that satisfies both properties in Definition \ref{SFC} for some $\tau \in \mathbb{R}$, we say that $\Phi$ satisfies the smooth spatial factorization condition of rank $\varkappa$.
\end{definition}
\subsection{Fourier Lebesgue spaces vs $B^{s}_{p,q}$ and $F^{s}_{p,q}$ spaces}
In the proof of Theorems~\ref{Main:Theorem}, we make essential use of boundedness results for Fourier integral operators with complex-valued phase functions acting on Fourier--Lebesgue spaces. In Theorems~\ref{Thm2.4} and~\ref{Thm2.5}, the relevant embeddings into Fourier--Lebesgue spaces are established for a wide range of indices, allowing $0<r\le\infty$. 
However, in the proofs of Theorems~\ref{Main:Theorem}, we restrict attention to the range $1\le r\le\infty,$ in order to obtain boundedness of the Fourier integral operator $T$ from Besov spaces to Fourier--Lebesgue spaces and from Triebel--Lizorkin spaces to Fourier--Lebesgue spaces. This restriction is natural, since the operator
\[
T:\mathscr{F}L^{r}\longrightarrow \mathscr{F}L^{r}
\]
is bounded whenever
\[
m\le -\varkappa\bigg|\frac{1}{r}-\frac{1}{2}\bigg|,
\qquad 1\le r\le\infty.
\]
The arguments employed in \cite{CardonaObengDentehOpoku} are based on this condition and make use of it to establish the boundedness of the adjoint operator $T^{*}$ via duality when $1<r<\infty$. 
Although the embedding results in Theorems~\ref{Thm2.4} and~\ref{Thm2.5} remain valid in the range of quasi-Banach spaces, the boundedness results of Theorems~\ref{Main:Theorem}  are therefore formulated and proved only for $1\le r\le\infty.$ Next, we introduce some embedding properties about Fourier--Lebesgue spaces on Besov spaces and Triebel-Lizorkin spaces.
\begin{theorem}[$B^s_{p,q} \label{Thm2.4} \hookrightarrow \mathscr{F}L^r$] Let $0<p,q,r\le \infty, s\in\mathbb{R} \label{ab}
$. Then $B^s_{p,q}  \hookrightarrow \mathscr{F}L^r$ holds if and only if $1/p+1/r\ge 1, p\le 2$ and one of the following conditions is satisfied: 
\begin{enumerate}
    \item $q\le r, s\ge d(1/r+1/p-1);$
    \item $q> r, s> d(1/r+1/p-1)$.
\end{enumerate}
\end{theorem}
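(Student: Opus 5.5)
We prove the two directions separately: sufficiency by a Littlewood--Paley decomposition on the Fourier side, and necessity by testing the embedding on explicit families of functions.

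\textbf{Sufficiency.} Since $\mathscr{F}f=\sum_{j\ge0}\varphi_j\mathscr{F}f$ with finite overlap, we have
\[
\|\mathscr{F}f\|_{L^r}\lesssim \Big\|\big(\|\varphi_j\mathscr{F}f\|_{L^r}\big)_j\Big\|_{\ell^{r}}
\]
for $r\le\infty$. For $0<r<1$ we use the $r$-triangle inequality, and for $r\ge1$ almost-orthogonality of supports gives the same bound.

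The key estimate is block-wise. Fix $j$ and note that $\varphi_j\mathscr{F}f=\mathscr{F}(\Delta_jf)$ is supported in a ball of radius $\sim2^j$.
\begin{itemize}
\item If $p\le2$ and $r\ge p'$ (equivalently $1/p+1/r\le1$ is excluded; we use $1/r\le 1/p'$ only when it holds), Hausdorff--Young gives $\|\mathscr{F}\Delta_jf\|_{L^{p'}}\lesssim\|\Delta_jf\|_{L^p}$ for $1\le p\le2$.
\item In the case of interest, $1/r\ge 1-1/p=1/p'$, so $r\le p'$. Hölder on the support of volume $\sim2^{jd}$ then gives
\[
\|\mathscr{F}\Delta_jf\|_{L^r}\lesssim 2^{jd(1/r-1/p')}\|\Delta_jf\|_{L^p}=2^{jd(1/r+1/p-1)}\|\Delta_jf\|_{L^p}.
\]
\item For $p<1$, Hausdorff--Young is unavailable. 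Instead we use the Nikol'skii/Plancherel--Polya inequality $\|\Delta_jf\|_{L^1}\lesssim2^{jd(1/p-1)}\|\Delta_jf\|_{L^p}$ (band-limited functions), followed by $\|\mathscr{F}g\|_{L^\infty}\le\|g\|_{L^1}$ and Hölder again. This yields the same exponent $d(1/r+1/p-1)$.
\end{itemize}

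Set $s_0=d(1/r+1/p-1)$. Combining the block estimate with the first display gives $\|f\|_{\mathscr{F}L^r}\lesssim\|2^{js_0}\|\Delta_jf\|_{L^p}\|_{\ell^r}$. If $q\le r$ and $s\ge s_0$, we conclude by $\ell^q\hookrightarrow\ell^r$. If $q>r$ and $s>s_0$, Hölder in $j$ against the summable factor $2^{-j(s-s_0)\cdot}$ gives the bound.

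\textbf{Necessity.} Each condition is tested on an explicit family.
\begin{itemize}
\item \emph{$p\le2$.} Take $f_N=\sum_{k\le N}e^{2\pi i x\cdot a_k}\chi(x-b_k)$, a sum of translated and modulated bumps at a fixed frequency scale. Choose positions so that $\|f_N\|_{B^s_{p,q}}\sim N^{1/p}$. The Fourier transform is a sum of modulated bumps whose $L^r$ norm, after averaging over random signs (Khintchine), is $\sim N^{1/2}$. Letting $N\to\infty$ forces $1/p\ge1/2$.
\item \emph{$1/p+1/r\ge1$.} Take $\mathscr{F}f=\sum_k\epsilon_k\theta(\xi-k)$ on unit cubes. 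This gives $\|f\|_{\mathscr{F}L^r}\sim N^{1/r}$, while $\|f\|_{L^p}\lesssim N^{1-1/p}$ (for $p\ge1$, by concentrating $f$ near the origin through the choice of phases). This forces $1/r\ge1-1/p$.
\item \emph{Scaling.} Take $f_j$ with $\mathscr{F}f_j=\varphi(2^{-j}\cdot)$. Then $\|f_j\|_{B^s_{p,q}}\sim2^{j(s+d-d/p)}$ and $\|f_j\|_{\mathscr{F}L^r}\sim2^{jd/r}$, which forces $s\ge s_0$.
\item \emph{Strictness when $q>r$.} Take $f=\sum_j c_j2^{-js_0}g_j$ with $g_j$ normalized as above and $(c_j)\in\ell^q\setminus\ell^r$. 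The Fourier supports are disjoint, so $\|f\|_{\mathscr{F}L^r}\sim\|c\|_{\ell^r}=\infty$ while $f\in B^{s_0}_{p,q}$.
\end{itemize}

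\textbf{Main obstacle.} The hardest step is the necessity of $p\le2$ and of $1/p+1/r\ge1$ in the quasi-Banach range $p<1$ or $r<1$. There the examples must be chosen so that the $B^s_{p,q}$ norm, which involves $L^p$ of a sum, is genuinely computed. I would first try lacunary, well-separated spatial translates, so that the $L^p$ norms add as $\ell^p$ sums. As an alternative, this direction can be deferred to the sharp embedding results of Lu \cite{Lu}, which the paper cites as the source of this theorem.
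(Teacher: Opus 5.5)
The paper does not prove this statement; it quotes it from \cite{Lu} and uses it as a black box, so your argument has to stand on its own.

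\textbf{What works.} Your sufficiency part is correct. It combines Littlewood--Paley blocks with Hausdorff--Young for $1\le p\le 2$, or Nikol'skii plus $\|\widehat{g}\|_{L^\infty}\le\|g\|_{L^1}$ for $p<1$. Then Hölder on a support of measure $\sim 2^{jd}$, followed by $\ell^q\subset\ell^r$ or Hölder in $j$, finishes it. Your first bullet there is garbled, but the estimate you actually use is right. The scaling test and the $q>r$ endpoint test are also fine, once you normalize $\|g_j\|_{L^p}=1$ and keep only even $j$ so the Fourier supports are disjoint. The random-sign test proves $p\le 2$, provided all modulations $a_k$ coincide so that the Fourier-side bumps overlap and Khintchine gives $N^{1/2}$.

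\textbf{The gap: necessity of $1/p+1/r\ge 1$.} Your family $\mathscr{F}f=\sum_k\epsilon_k\theta(\cdot-k)$ uses $N$ unit cubes at integer points, so $\mathscr{F}f$ lives on $|\xi|\lesssim N^{1/d}$. You compare $\|\mathscr{F}f\|_{L^r}\sim N^{1/r}$ with $\|f\|_{L^p}\lesssim N^{1-1/p}$. That refutes $L^p\hookrightarrow\mathscr{F}L^r$, not the Besov embedding, because the $B^s_{p,q}$ norm carries the weight $2^{js}$.

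Take $1\le p\le 2$ and a top block with $2^j\sim N^{1/d}$. Hausdorff--Young there gives $\|f\|_{B^s_{p,q}}\gtrsim N^{s/d}N^{1-1/p}$. Hence $\|\mathscr{F}f\|_{L^r}/\|f\|_{B^s_{p,q}}\lesssim N^{(s_0-s)/d}$ with $s_0=d(1/r+1/p-1)$. So your example contradicts the embedding only for $s<s_0$, which the scaling test already covers. But when $1/p+1/r<1$ we have $s_0<0$, and a counterexample is needed for every $s$, in particular all $s\ge 0$. There your family gives nothing. The test functions must have frequency support in a fixed bounded set, so that the Besov norm does not see $s$.

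\textbf{A fix.} Take $g$ with $\widehat{g}\in C_c^\infty(\{|\xi|<1\})$ and set $f_\lambda=g(\cdot/\lambda)$ for $\lambda\ge 1$. Since $\varphi_0=\psi=1$ on the unit ball and $\varphi_j=0$ there for $j\ge1$, we get $\Delta_0 f_\lambda=f_\lambda$ and $\Delta_jf_\lambda=0$ for $j\ge 1$. Thus $\|f_\lambda\|_{B^s_{p,q}}=\lambda^{d/p}\|g\|_{L^p}$ for every $s$ and $q$. On the other hand, $\|\mathscr{F}f_\lambda\|_{L^r}=\lambda^{d(1-1/r)}\|\widehat{g}\|_{L^r}$. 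Letting $\lambda\to\infty$ forces $1-1/r\le 1/p$.

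Equivalently, drop the signs in your first example. Lattice translates of a fixed low-frequency bump have Fourier transform equal to the bump times a Dirichlet kernel. That kernel has $L^r$ norm $\sim N^{1-1/r}$ for $r>1$, against a Besov norm $\lesssim N^{1/p}$. The Dirichlet kernel must sit on the Fourier side at bounded frequency, not on the physical side with frequencies spreading to infinity as in your construction.
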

As for Triebel-Lizorkin spaces we have these properties.
\begin{theorem}[$F^{s}_{p,q}\label{Thm2.5} \hookrightarrow \mathscr{F}L^{r}$]
Let $0 < p < \infty$, $0 < q, r \le \infty$, and $s \in \mathbb{R}$.  Then $
F^{s}_{p,q} \hookrightarrow \mathscr{F}L^{r}$ \label{1}
holds if and only if $p \le 2,1/p + 1/r \ge 1,$
and one of the following conditions is satisfied:
\begin{enumerate}
  \item $r < p, 
  s > d\Big(\frac{1}{p} + \frac{1}{r} - 1\Big);$
\item $r \ge p$, $r = p' < q$, 
  $s > d\Big(\frac{1}{p} + \frac{1}{r} - 1\Big);$
  \item $r \ge \max\{p,q\},$ $s \ge d\Big(\frac{1}{p} + \frac{1}{r} - 1\Big);$
  \item $p \le r < p'$ , $
  s \ge d\Big(\frac{1}{p} + \frac{1}{r} - 1\Big).$
\end{enumerate}
\end{theorem}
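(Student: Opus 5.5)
The statement to prove is Theorem~\ref{Thm2.5}: a characterization of the embedding $F^{s}_{p,q}\hookrightarrow\mathscr{F}L^r$. I would reduce it to Theorem~\ref{Thm2.4}, using the elementary embeddings between Besov and Triebel--Lizorkin spaces, and add counterexamples for necessity.

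\textbf{Sufficiency.} Recall that
\[
B^s_{p,\min(p,q)}\hookrightarrow F^s_{p,q}\hookrightarrow B^s_{p,\max(p,q)} .
\]
Recall also that $F^{s}_{p,q}\hookrightarrow F^{s-\epsilon}_{p,\tilde q}$ for every $\tilde q$ when $\epsilon>0$.

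In case (1), where the inequality on $s$ is strict, I pick $\epsilon>0$ with $s-\epsilon>d(1/p+1/r-1)$. Then $F^s_{p,q}\hookrightarrow B^{s-\epsilon}_{p,\infty}$, and Theorem~\ref{Thm2.4}(2) (with $q=\infty$) gives the embedding into $\mathscr{F}L^r$. The same argument covers case (2).

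In case (3), $r\ge\max\{p,q\}$, so $F^s_{p,q}\hookrightarrow B^s_{p,\max(p,q)}$ with $\max(p,q)\le r$. Theorem~\ref{Thm2.4}(1) then gives the claim at the critical $s$.

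Case (4), with $p\le r<p'$ at the endpoint $s=d(1/p+1/r-1)$ and arbitrary $q$, cannot be reduced to Besov spaces. Here I use $F^s_{p,q}\hookrightarrow F^s_{p,\infty}$. I then argue directly by interpolating Hausdorff--Young with Sobolev/Jawerth-type embeddings. Since $r<p'$, I write $1/r=1/p'+\theta$ with $\theta>0$. The Jawerth--Franke embedding gives $F^{s}_{p,\infty}\hookrightarrow B^{s-d(1/p-1/u)}_{u,p}$ for $u>p$. I choose $u\in(p,2]$ with $u'=r$. The new smoothness is $s-d(1/p-1/u)=d(1/u+1/r-1)=0$. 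The inner index $p\le r$ then makes Theorem~\ref{Thm2.4}(1) applicable with $(u,p,r)$, because $1/u+1/r=1$ and $u\le2$. This is the key step, and I expect it to be the main obstacle: the choice of $u$ must satisfy $p<u\le 2$, which is exactly what $p\le r<p'$ encodes.

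\textbf{Necessity.} The conditions $p\le2$ and $1/p+1/r\ge1$ follow by testing on modulated or translated bumps, as in the Besov case. This gives the necessary conditions $p\le2$, $1/p+1/r\ge1$ and $s\ge d(1/p+1/r-1)$. It remains to show that the critical endpoint fails outside cases (3) and (4), namely when $r<p$, or when $r=p'<q$. For this I use lacunary sums $f=\sum_j a_j 2^{-js}2^{jd(1-1/p)}\phi(2^j x)$. When $r<p$, I compare the $L^p$-behaviour in $x$, which is controlled by a maximal-function $\ell^q$ norm, with the $\ell^r$ sum of the Fourier pieces; the mismatch $r<p$ lets a sequence in $\ell^p\setminus\ell^r$ escape. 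When $r=p'<q$, I instead place frequency pieces at separated spatial locations, so that $F$-norms behave like $\ell^p$ in $j$ while the $\mathscr{F}L^{p'}$ norm behaves like $\ell^{q}$-incompatible sums.
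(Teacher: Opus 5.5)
The paper does not prove this statement; it quotes it from Lu's work. So I judged your argument on its own terms. Your sufficiency reductions in cases (1)--(3) are correct. The standard tests for $p\le2$, $1/p+1/r\ge1$ and $s\ge d(1/p+1/r-1)$ are fine, as is the concentric-scale test for $r<p$. Its key input is that a sum of concentric dyadic bumps has $F^s_{p,q}$-norm $\lesssim\|a\|_{\ell^p}$ uniformly in $q$. There is, however, a genuine gap in the necessity for case (2), and an error in case (4).

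\textbf{Strictness when $r=p'<q$.} Your construction cannot work. If the frequency pieces sit at separated spatial locations, essentially one piece is nonzero at each point, so the inner $\ell^q$-sum collapses. Then $\|f\|_{F^0_{p,q}}\approx(\sum_j\|f_j\|_{L^p}^p)^{1/p}$, while $\|\mathscr{F}f\|_{L^{p'}}$ is at most of the same size (it is $\approx\|a\|_{\ell^{p'}}\le\|a\|_{\ell^p}$, since $p\le2\le p'$). Put structurally: the endpoint embedding is true for $q\le p'$ by case (3). Hence no test family whose $F^0_{p,q}$-norms are essentially independent of $q$ (which is what ``$F$-norms behave like $\ell^p$ in $j$'' means) can disprove it for $q>p'$. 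The counterexample must exploit the inner $\ell^q$-norm, so all pieces have to live at the same place. Take $f=\sum_{j\le N}a_j e^{2\pi i\xi_j\cdot x}\psi(x)$ with $\xi_j=\tfrac32\,2^je_1$ and $\mathscr{F}\psi$ supported in a ball of radius $1/4$. Then $\Delta_kf$ only involves $|j-k|\le1$, and $|\Delta_kf|\lesssim(|a_{k-1}|+|a_k|+|a_{k+1}|)\Psi$ for one fixed Schwartz function $\Psi$. Hence $\|f\|_{F^0_{p,q}}\lesssim\|a\|_{\ell^q}$, whereas $\|\mathscr{F}f\|_{L^{p'}}=\|\mathscr{F}\psi\|_{L^{p'}}\|a\|_{\ell^{p'}}$ by disjointness of supports. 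Choosing truncations of some $a\in\ell^q\setminus\ell^{p'}$ shows that the endpoint $s=0=d(1/p+1/p'-1)$ fails.

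\textbf{Case (4).} Your choice $u=r'$ satisfies $u\le2$ only when $r\ge2$. The hypothesis $p\le r<p'$ does not encode this: for example, $p=1$, $r=3/2$ gives $u=3$, and then Theorem~\ref{Thm2.4} does not apply to the Besov space you land in. Instead pick any $u$ with $p<u\le2$ and $1/u+1/r\ge1$. This is possible because $p<2$ (case (4) is empty for $p=2$) and $1/p+1/r>1$; take $u=2$ if $r\le2$ and $u=r'$ if $r>2$. Jawerth--Franke then gives $F^s_{p,\infty}\hookrightarrow B^{d(1/u+1/r-1)}_{u,p}$, which is exactly the critical smoothness for the pair $(u,r)$. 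Theorem~\ref{Thm2.4}(1) applies because the inner index $p$ is $\le r$.
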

The embedding properties above will be used in the next section.

\section{Proof of Theorem \ref{XXX:intro}}\label{Hype:PDE}
\subsection{Boundedness properties of FIOs}

The following is the main theorem of this subsection. This result  and also the main theorem in \cite{CardonaObengDentehOpoku} will be used in the proof of Theorem \ref{XXX:intro}. We note that in some sense the boundedness properties between Fourier Lebesgue spaces and Besov or Triebel-Lizorkin spaces are a direct consequence of the embedding properties presented in the previous section. However, we present it since Besov spaces and Triebel-Lizorkin spaces provide information about the regularity $s,$ or $-s,$ of functions, well in the domain or in the co-domain of the operator. 

\begin{theorem}\label{Main:Theorem} Let $T\in I^m(\mathbb{R}^d,\mathbb{R}^d; \mathcal{C})$ be a Fourier integral operator associated with a complex canonical relation $\mathcal{C}$, locally parametrized by a phase function $\Phi$ of positive type, and satisfying the spatial smooth factorization condition (SSFC) of rank $\varkappa$,\,$0\le \varkappa\le d$ (see Definition~\ref{SFC}). The following boundedness properties hold. 
\begin{itemize}
    \item[1.]   If $m\in \mathbb{R}$ is such that, \[ m< -\varkappa \bigg|\dfrac{1}{q}-\dfrac{1}{2}\bigg|-d\bigg(\dfrac{1}{q}-\dfrac{1}{p}\bigg),\] with $1\le q< p<\infty,$ then the operator $T:\mathscr{F}L^p\longrightarrow \mathscr{F}L^q ,$ extends to a bounded operator, that is,  \[ \exists C>0,\, \forall f\in \mathscr{F}L^p,\,\, \|Tf\|_{\mathscr{F}L^q} \lesssim \|f\|_{\mathscr{F}L^p}.\]  
    \item[2.] For every $0<p,q<\infty, 1\le r\le \infty,\text{ and } s\in \mathbb{R}$,  the operator $T:B^s_{p,q} \longrightarrow \mathscr{F}L^r$ extends to a bounded operator, that is \[\exists C>0,\, \forall f\in B^s_{p,q}, \, \|Tf\|_{\mathscr{F}L^r}\lesssim \|f\|_{B^s_{p,q}},\] 
    provided that $m\in \mathbb{R}$ satisfies that
    $$ m\le - \varkappa \bigg|\dfrac{1}{r}-\dfrac{1}{2}\bigg|,\, 1/p+1/r\ge 1,\quad p\le 2,$$ and one of the following conditions is satisfied:
    \begin{enumerate}
    \item $q\le r, s\ge d(1/r+1/p-1);$
    \item $q> r, s> d(1/r+1/p-1)$.
\end{enumerate}Moreover, if $1<r<\infty,$ then $T: \mathscr{F}L^{r'} \to B^{-s}_{p',q'}$ extends to a bounded operator.
\item[3.] For every $0 < p < \infty$, $0 < q \le \infty$, $1\le r \le \infty$ and $s \in \mathbb{R}$,  the operator $T:F^s_{p,q} \longrightarrow \mathscr{F}L^r$ extends to a bounded operator, that is, \[\exists C>0,\, \forall f\in F^s_{p,q}, \, \|Tf\|_{\mathscr{F}L^r}\lesssim \|f\|_{F^s_{p,q}},\] if and only if $p \le 2,1/p + 1/r \ge 1,$
and one of the following conditions is satisfied:
\begin{enumerate}
  \item $r < p, 
  s > d\Big(\frac{1}{p} + \frac{1}{r} - 1\Big);$
\item $r \ge p$, $r = p' < q$, 
  $s > d\Big(\frac{1}{p} + \frac{1}{r} - 1\Big);$
  \item $r \ge \max\{p,q\},$  $s \ge d\Big(\frac{1}{p} + \frac{1}{r} - 1\Big);$
  \item $p \le r < p'$ , $
  s \ge d\Big(\frac{1}{p} + \frac{1}{r} - 1\Big)$,
\end{enumerate}
and  in all the cases above the order condition $m\le -\varkappa \bigg|\dfrac{1}{r}-\dfrac{1}{2}\bigg|$, holds.\\
Moreover, if $1<r<\infty,$ then $T: \mathscr{F}L^{r'} \to F^{-s}_{p',q'}$ extends to a bounded operator.
\end{itemize}    
\end{theorem}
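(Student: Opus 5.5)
The plan is to reduce every item to two ingredients. The first is the Fourier--Lebesgue boundedness from \cite{CardonaObengDentehOpoku}: $T:\mathscr{F}L^r\to\mathscr{F}L^r$ is bounded whenever $m\le -\varkappa|1/r-1/2|$ and $1\le r\le\infty$. The second is the embeddings of Theorems~\ref{Thm2.4} and~\ref{Thm2.5}, together with an elementary embedding between Fourier--Lebesgue spaces. Throughout I use that the symbol is compactly supported in $x$, so $Tf$ is supported in a fixed compact set $K$.

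\textbf{Item 1.} Fix $m$ with $m<-\varkappa|1/q-1/2|-d(1/q-1/p)$. Choose $\epsilon>0$ with
\[
m+d\Big(\frac1q-\frac1p\Big)+\epsilon\le -\varkappa\Big|\frac1q-\frac12\Big|,
\]
and factor
\[
T=T_1\circ\langle D\rangle^{-d(1/q-1/p)-\epsilon}, \qquad T_1:=T\circ\langle D\rangle^{d(1/q-1/p)+\epsilon}.
\]
Here $T_1$ is an FIO with the same phase, so it still satisfies (SSFC) of rank $\varkappa$, and its symbol $\sigma(x,\eta)\langle\eta\rangle^{d(1/q-1/p)+\epsilon}$ has order at most $-\varkappa|1/q-1/2|$. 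By \cite{CardonaObengDentehOpoku}, $T_1$ is bounded on $\mathscr{F}L^q$.

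It remains to show $\langle D\rangle^{-d(1/q-1/p)-\epsilon}:\mathscr{F}L^p\to\mathscr{F}L^q$ is bounded. On the Fourier side this is multiplication of $\hat f$ by $\langle\eta\rangle^{-d(1/q-1/p)-\epsilon}$. By Hölder with $1/q=1/p+1/u$, this multiplier lies in $L^u$ precisely because $u\cdot\big(d(1/q-1/p)+\epsilon\big)>d$, which holds since $u=(1/q-1/p)^{-1}$. Composing gives $\|Tf\|_{\mathscr{F}L^q}\lesssim\|f\|_{\mathscr{F}L^p}$ on a dense class, and the bound extends by density (weak-$*$ density when $p=\infty$ is excluded anyway).

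\textbf{Item 2.} Under the stated conditions on $(p,q,r,s)$, Theorem~\ref{Thm2.4} gives $B^s_{p,q}\hookrightarrow\mathscr{F}L^r$. The order condition $m\le-\varkappa|1/r-1/2|$ gives $T:\mathscr{F}L^r\to\mathscr{F}L^r$ by \cite{CardonaObengDentehOpoku}, and composing the two yields the claim.

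For the dual statement with $1<r<\infty$, apply the same argument to $T^*$. It is again an FIO of order $m$ with positive-type phase $-\overline{\Phi}$ after the usual reduction, and the condition $m\le-\varkappa|1/r-1/2|$ is symmetric under $r\leftrightarrow r'$. So $T^*:B^s_{p,q}\to\mathscr{F}L^{r}$, and I dualize using $(\mathscr{F}L^r)'=\mathscr{F}L^{r'}$ and $(B^s_{p,q})'=B^{-s}_{p',q'}$ for $1\le p,q<\infty$ via the $L^2$ pairing. This gives $T:\mathscr{F}L^{r'}\to B^{-s}_{p',q'}$. For $p<1$ or $q<1$ I would instead use that the dual of $B^s_{p,q}$ is identified with the corresponding Besov space with $p'=\infty$ (suitably shifted), or simply restrict to the Banach range.

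\textbf{Item 3.} The sufficiency direction is identical, with Theorem~\ref{Thm2.5} in place of Theorem~\ref{Thm2.4}, and the dual statement is handled as in Item~2. For the ``only if'' direction, I read the statement as asserting necessity of the embedding conditions. The plan is to take $T$ elliptic near some point, for instance a phase close to $x\cdot\eta$, so that the identity is a composition of $T$ with microlocal parametrices bounded on $\mathscr{F}L^r$. Then boundedness of $T$ forces $F^s_{p,q}\hookrightarrow\mathscr{F}L^r$ locally, and the necessity part of Theorem~\ref{Thm2.5} applies.

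The main obstacle is twofold. First, justifying that the modified operators ($T_1$, and $T^*$) still satisfy (SSFC) of rank $\varkappa$ with the required non-degeneracy. Second, making the necessity claim precise, since for a general $T$ the ``if and only if'' cannot hold and must be understood for non-degenerate (elliptic) $T$.
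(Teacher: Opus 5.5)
Your sufficiency arguments are the paper's. In item~1 the paper also writes $T=\bigl(T(1-\Delta)^{s/2}\bigr)(1-\Delta)^{-s/2}$ with $s=d(1/q-1/p)+\epsilon$, applies the $\mathscr{F}L^q$ theorem of \cite{CardonaObengDentehOpoku} to the first factor and H\"older to the second. In items~2 and~3 it composes the embeddings of Theorems~\ref{Thm2.4} and~\ref{Thm2.5} with $T:\mathscr{F}L^r\to\mathscr{F}L^r$. Your worry about $T_1$ is unfounded: in the local representation the phase is untouched and only the amplitude changes, to $\sigma(x,\eta)\langle\eta\rangle^{a}$.

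For the dual statements you take a detour. The paper dualizes the \emph{embedding}, $i^*:\mathscr{F}L^{r'}\to B^{-s}_{p',q'}$ (resp.\ $F^{-s}_{p',q'}$), and precomposes with $T$ acting on $\mathscr{F}L^{r'}$. That operator is bounded because $|1/r'-1/2|=|1/r-1/2|$. No property of $T^*$ is ever needed, so your second obstacle disappears; it is a legitimate worry, since (SSFC) is imposed in the $x$-variable and is not symmetric under $x\leftrightarrow y$. Even in your route, $T^*$ is bounded on $\mathscr{F}L^r$ simply by duality from $T$ on $\mathscr{F}L^{r'}$, and $(T^*\circ i)^*=i^*\circ T$ is exactly the paper's map. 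Your caveat that $(B^s_{p,q})'=B^{-s}_{p',q'}$ requires $p,q\ge 1$ is a correct refinement that the paper does not make.

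The genuine gap is your plan for the ``only if'' part of item~3. Parametrices for an elliptic $T$ can at most show that boundedness of $T$ forces a \emph{local} embedding, i.e.\ one for compactly supported functions. The conditions in Theorem~\ref{Thm2.5} characterize the \emph{global} embedding and are not necessary locally, so the last step of your plan fails.

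In fact the necessity claim is false even for elliptic $T$. Take $Tf=\chi f$ with $\chi\in C_c^\infty(\mathbb{R}^d)$, i.e.\ phase $x\cdot\eta$ and symbol $\chi(x)\in S^0_{1,0}$. This operator satisfies (SSFC) of any rank, since $\nabla_x\Phi=\eta$ is independent of $x$. For $r=2$ it satisfies the order condition $m=0\le 0$, and it is elliptic where $\chi\neq 0$. By Plancherel and H\"older,
\[
\|\chi f\|_{\mathscr{F}L^2}=\|\chi f\|_{L^2}\le\|\chi\|_{L^6}\|f\|_{L^3}\lesssim\|f\|_{F^0_{3,2}},
\]
so $T:F^0_{3,2}\to\mathscr{F}L^2$ is bounded, although $p=3>2$ and $1/p+1/r<1$. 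For general $T$, $T=0$ or any smoothing operator is a trivial counterexample, as you suspected.

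No argument can establish this direction. The paper's own proof proves only sufficiency and never addresses necessity, and that sufficiency part is what your argument correctly delivers.
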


\begin{proof}[Proof of Theorem \ref{Main:Theorem}]
Let us consider $T\in I^m(\mathbb{R}^d,\mathbb{R}^d;\mathcal{C})$ with the canonical relation $\mathcal{C},$ (locally) parametrized by a complex-valued phase function $\Phi$ satisfying the spatial smooth factorization condition (SSFC) with rank $\varkappa$ where $0\le \varkappa \le d$. 
For the proof of (1) let us fix $s\in \mathbb{R},$ and we let $(1-\Delta)^{s/2}(1-\Delta)^{-s/2}=id.$
We decompose $Tf$ using the Bessel potential operator $(1-\Delta)^{-s/2}$ as 
\begin{align}
 Tf=T(1-\Delta)^{s/2}(1-\Delta)^{-s/2}f.\label{FLS}
 \end{align}
 By taking the Fourier-Lebesgue norm on (\ref{FLS}) one has that
\begin{align*}
     \|Tf\|_{\mathscr{F}L^q}  &= \|T(1-\Delta)^{s/2}(1-\Delta)^{-s/2}f\|_{\mathscr{F}L^q}\\
  &\le \|T(1-\Delta)^{s/2}\|_{\mathscr{F}L^q}\|(1-\Delta)^{-s/2}f\|_{\mathscr{F}L^q}.
\end{align*}
The order of $T$ and $(1-\Delta)^{s/2}$ are $m$ and $s$ resectively.  Since, the composition of a Fourier integral operator with a Fourier multiplier, on the right, is again a Fourier integral operator with the same canonical relation \cite{Melin-Sjostrand1976},  the operator $T(1-\Delta)^{s/2}$ is also a Fourier integral operator of order $m+s.$ We have that $T(1-\Delta)^{s/2}\in I^{m+s}(\mathbb{R}^d,\mathbb{R}^d;\mathcal{C})$.
We seek to find the bound of $T(1-\Delta)^{s/2}$ in the Fourier Lebesgue space. That is,
$ T(1-\Delta)^{s/2}: \mathscr{F}L^q\longrightarrow \mathscr{F}L^q,$  is bounded provided $m+s\le-\varkappa\bigg|\dfrac{1}{q}-\dfrac{1}{2}\bigg|.$
Again, we now estimate $(1-\Delta)^{-s/2}f.$ By the definition of the Fourier-Lebesgue norm, we have,
\begin{align*}
    \|(1-\Delta)^{-s/2}f\|_{\mathscr{F}L^q}^q&= \int_{\mathbb{R}^d}|\mathscr{F}[{(1-\Delta)^{-s/2}f](\eta)}|^q\,d\eta
    = \int_{\mathbb{R}^d} \langle \eta \rangle^{-sq}|\mathscr{F}f(\eta)|^q\,d\eta.
    \end{align*}
     By applying H\"older's inequality  with, $\dfrac{1}{r'}+\dfrac{1}{r}=1,$ and $1\le r\le \infty,$ yields 
    \begin{align*}
    \|(1-\Delta)^{-s/2}f\|_{\mathscr{F}L^q}^q   &\le \bigg(\int_{\mathbb{R}^d}\langle \eta \rangle^{-sqr'} \,d\eta\bigg )^{1/r'}\bigg(\int_{\mathbb{R}^d}|\mathscr{F}f(\eta)|^{qr} \,d\eta\bigg )^{1/r}. 
\end{align*}
Let $qr=p\ge 1,$ we have that,
\begin{align*}
   \|(1-\Delta)^{-s/2}f\|_{\mathscr{F}L^q}^q &\le C \bigg(\int_{\mathbb{R}^d}|\mathscr{F}f(\eta)|^{qr} \,d\eta\bigg )^{1/r}\le C \bigg(\int_{\mathbb{R}^d}|\mathscr{F}f(\eta)|^{p} \,d\eta\bigg )^{1/p},
\end{align*}
where 
$C:= \int_{\mathbb{R}^d}\langle \eta \rangle^{-sqr'} \,d\eta < \infty,$ provided that $s>d\bigg(\dfrac{1}{q}-\dfrac{1}{p}\bigg).$ 
Fix $\epsilon >0,$ and set $s=d\bigg(\dfrac{1}{q}-\dfrac{1}{p}\bigg)+\epsilon$. With this choice the operator $(1-\Delta)^{-s/2}$ defines a bounded operator
$(1-\Delta)^{-s/2}:\mathscr{F}L^p \longrightarrow \mathscr{F}L^q.$
This implies that the boundedness of $T(1-\Delta)^{-s/2}$ is guaranteed provided
$ 
m\le -\varkappa\bigg|\dfrac{1}{q}-\dfrac{1}{2}\bigg|-d\bigg(\dfrac{1}{q}-\dfrac{1}{p}\bigg)-\epsilon . 
 $ 
Since $\epsilon>0,\, m<-\varkappa\bigg|\dfrac{1}{q}-\dfrac{1}{2}\bigg|-d\bigg(\dfrac{1}{q}-\dfrac{1}{p}\bigg),$ for $1\le p<q\le \infty.$
Therefore, 
$ 
 \|Tf\|_{\mathscr{F}L^q} \lesssim \|f\|_{\mathscr{F}L^p}\,\, \forall f \in \mathscr{F}L^p.
$ The proof is complete.

Now, let us prove (2). Let $0<p,q< \infty$, $1\le r\le \infty $ and $s\in \mathbb{R}.$ 
     Under the assumptions stated above, we have a continuous embedding 
    $ 
         i: B^s_{p,q}\hookrightarrow \mathscr{F}L^r,
     $ 
     with 
     $ 
         \|if\|_{\mathscr{F}L^r}\le \|f\|_{B^s_{p,q}}.
     $ 
     Since $\Phi$ satisfies the spatial smooth factorization condition of rank $\varkappa,$ and it is associated to the canonical relation $\mathcal{C}$, that is $T\in I^m(\mathbb{R}^d,\mathbb{R}^d, \mathcal{C})$, the Fourier integral operator $T$ is bounded on the Fourier Lebesgue space 
     $ 
         T=T\circ i:\mathscr{F}L^r\longrightarrow \mathscr{F}L^r,
     $ 
     provided that $ m\le -\varkappa\bigg|\dfrac{1}{r}-\dfrac{1}{2}\bigg|. $ 
     That is, we have the inequality $\|Tg\|_{\mathscr{F}L^r}\lesssim \|g\|_{\mathscr{F}L^r}, \, \forall g\in \mathscr{F}L^r.$
     For $f\in B^s_{p,q},$ 
     we have that $Tf=(T\circ i)f,$ such that
     $  \|Tf\|_{\mathscr{F}L^r}=\|T(if)\|_{\mathscr{F}L^r}\lesssim \|if\|_{\mathscr{F}L^r}\lesssim \|f\|_{\mathscr{F}L^r}.$ 
     Hence $  T:B^s_{p,q}\longrightarrow \mathscr{F}L^r,$  is bounded.
     If $1<r<\infty,$ then the dual embedding satisfies 
     $  i^*: \mathscr{F}L^{r'}\longrightarrow B^{-s}_{p',q'},  $ where $r'$ is defined by the identity: $ \dfrac{1}{r}+\dfrac{1}{r'}=1. $ 
     Since $T$ is bounded on $\mathscr{F}L^{r'}$ when $  m\le -\varkappa\bigg|\dfrac{1}{r}-\dfrac{1}{2}\bigg|,$ 
     the composition $i^*\circ T$ yields
     $ T: \mathscr{F}L^{r'} \to B^{-s}_{p',q'}, $  with the estimate $  \|Tf\|_{B^{-s}_{p',q'}}\lesssim \|f\|_{\mathscr{F}L^r{'}}. $
     
For the proof of (3), let $0<p< \infty$, $0<q\le \infty$ ,$1\le r\le \infty $ and $s\in \mathbb{R}.$ 
     Under the assumptions stated above, we have a continuous embedding 
     $ 
         i: F^s_{p,q}\hookrightarrow \mathscr{F}L^r,
     $ 
     with 
      $ 
         \|if\|_{\mathscr{F}L^r}\le \|f\|_{F^s_{p,q}}.
     $ 
     Since the phase $\Phi$ satisfies the spatial smooth factorization condition of rank $\varkappa$, associated with a canonical relation $\mathcal{C}$, with $T\in I^m(\mathbb{R}^d,\mathbb{R}^d, \mathcal{C})$, the Fourier integral operator $T$ is bounded on the Fourier Lebesgue space 
      $ 
         T=T\circ i:\mathscr{F}L^r\longrightarrow \mathscr{F}L^r,
      $ 
     provided that $ m\le -\varkappa\bigg|\dfrac{1}{r}-\dfrac{1}{2}\bigg|, $ 
     that is, $\|Tg\|_{\mathscr{F}L^r}\lesssim \|g\|_{\mathscr{F}L^r}, \, \forall g\in \mathscr{F}L^r.$
     For $f\in F^s_{p,q},$ 
     we have $Tf=(T\circ i)f,$ and 
     $  \|Tf\|_{\mathscr{F}L^r}=\|T(if)\|_{\mathscr{F}L^r}\lesssim \|if\|_{\mathscr{F}L^r}\lesssim \|f\|_{\mathscr{F}L^r}, $ 
     hence  $  T:F^s_{p,q}\longrightarrow \mathscr{F}L^r, $ is bounded. 
     If $1<r<\infty,$ then the dual embedding satisfies 
     $  i^*: \mathscr{F}L^{r'}\longrightarrow F^{-s}_{p',q'},  $  where $r$ and $r'$ satisfy, $ \dfrac{1}{r}+\dfrac{1}{r'}=1. $ 
     Since $T$ is bounded on $\mathscr{F}L^{r'}$ whenever $ m\le -\varkappa\bigg|\dfrac{1}{r}-\dfrac{1}{2}\bigg|, $
     the composition $i^*\circ T$ yields
     $ T: \mathscr{F}L^{r'} \to F^{-s}_{p',q'} , $  with the estimate  $  \|Tf\|_{F^{-s}_{p',q'}}\lesssim \|f\|_{\mathscr{F}L^r{'}}. $ 
     The proof is complete.
\end{proof}

\subsection{Well posedness properties}
In this subsection, we present applications of hyperbolic partial differential equations to the boundedness of Fourier integral operators with complex-valued phase functions. These properties were presented in Theorem \ref{XXX:intro} which we split in the analysis of the two theorems in this section. These applications are analogous to those established in \cite[Theorems~5.4.2 and~5.4.3]{Ruzhansky2001}. There boundedness results were restricted to the range $1<p<\infty$ on Lebesgue spaces $L^p$, the results presented here extend the regularity assumptions to the full range $1\leq p\leq \infty$ on Fourier--Lebesgue spaces $\mathcal{F}L^p$. The theorems stated below are based on applications of Fourier integral operators with complex phases developed in our previous work \cite{CardonaObengDentehOpoku}. The section is divided into two parts. In the first subsection, we treat the case of first-order hyperbolic partial differential operators. In the second subsection, we generalize these results to hyperbolic partial differential operators of any arbitrary order $m$.

\subsubsection{First order hyperbolic partial differential equations}
Let $A = A(x,D_x)$ be a first order pseudo-differential operator in the space of smooth functions on a compact manifold $X$, where
$D_x = - i\partial_x$, and
$D_t = - i\partial_t$.
We first consider the Cauchy problem
\begin{align}\label{AP1}
\begin{cases}
D_t v - A(x,D_x)v = 0, \\
v(0,x) = f(x).
\end{cases}
\end{align}
If the principal symbol $a(x,\xi)$ of $A$ is real-valued, then the Cauchy problem \eqref{AP1} is strictly hyperbolic and well posed. The solution operator $U(t)$ can be regarded as a Fourier integral operator with complex phase and of order zero. By applying Theorem~1.1 of \cite{CardonaObengDentehOpoku} or the main theorem of Nicola \cite{Nicola2010},   for $1 \le p \le\infty$, we have that the following $\mathcal{F}L^p$-regularity properties hold for the
fixed-time solutions of \eqref{AP1}. If $\alpha\in \mathbb{R},$ and 
 $ 
f \in \mathcal{F}L^{p}_{\alpha+d\lvert 1/p-1/2\rvert}(X),
 $ 
then
$ 
v(t,\cdot)\in \mathcal{F}L^{p}_{\alpha},
$ 
for all sufficiently small values of $t$, and the loss of regularity
 $ 
-d\left| \frac{1}{p}-\frac{1}{2}\right|,
 $ 
is sharp, see e.g. \cite{Nicola2010}.
We now consider a more general case. We assume that the principal symbol $a(x,\xi)$ is
complex-valued and satisfies
\begin{align}\label{eq:5.4.2}
\operatorname{Im} a(x,\xi) \ge 0,
\qquad
\forall (x,\xi)\in T^*X\setminus\{0\}.
\end{align}
We also assume that $A$ is a first order classical pseudo-differential operator. Since we are interested in local estimates, we may assume without loss of generality that $X$ is a compact manifold. The question is whether there exists an operator solution $U(t)$ to
\eqref{AP1}, that is, a continuous linear operator on
$\mathcal{D}'(X)$ such that in $X$ one has the initial value problem:
\begin{align}\label{AP2}
\begin{cases}
D_t U - A U = 0, \\
U(0) = I.
\end{cases}
\end{align}
Such an operator $U(t)$ exists for $t\ge 0$, is unique, it is a Fourier integral operator of order zero and with a global complex phase, and it depends smoothly on $t,$ see Laptev, Safarov, and Vassiliev, \cite{Laptev:Safarov:Vassiliev}. Our main theorems in the next section will provide estimates for these problems on Fourier Lebesgue spaces. 

\subsubsection{Higher order hyperbolic partial differential equations}
Let us consider $P$ to be a differential–pseudo-differential operator on $[0,T]\times X,$ of order
$m,$ and of the form:
\begin{align}\label{eq:5.4.10}
P = D_t^{\,m} + \sum_{j=1}^{m} P_j(t,x,D_x) D_t^{\,m-j},
\end{align}
where $X$ is a smooth compact manifold of dimension $d$, and for every
$1\le j\le m$, $P_j(t,x,D_x)$ is a classical pseudo-differential operator of order $j$ on
$X$, depending smoothly on $t$. The principal symbol $p_j(t,x,\xi)$ of $P_j(t,x,D_x)$ is a
smooth complex-valued function on $[0,T]\times (T^*X\setminus\{0\})$, positively
homogeneous of degree $j$ in $\xi$.
The principal symbol $p(t,\tau,x,\xi)$ of $P$ is given by
\begin{equation}\label{eq:5.4.11}
p(t,\tau,x,\xi)
= \tau^{m} + \sum_{j=1}^{m} p_j(t,x,\xi)\,\tau^{m-j}.
\end{equation}
We make the following assumptions on $P$. First, we assume that $P$ has simple characteristics. This means that for any
$(x_0,\xi^0)\in T^*X\setminus\{0\}$ and any $t_0\in[0,T]$, the roots of the
polynomial $p(t_0,\tau,x_0,\xi^0)$ in $\tau$ are distinct. Let $\tau_j$ denote
these roots. By the implicit function theorem, it follows that the functions $\tau_j$ are complex-valued, smooth on $[0,T]\times (T^*X\setminus\{0\})$, and
positively homogeneous of degree one in $\xi$. The principal symbol $p$ of $P$
can therefore be decomposed as
\[
p(t,\tau,x,\xi) = \prod_{j=1}^{m} \bigl(\tau - \tau_j(t,x,\xi)\bigr).
\]
Our second assumption is that
\[
\operatorname{Im}\tau_j \geq  0,
\quad \text{ in } [0,T]\times (T^*X\setminus\{0\}).
\]
We consider the following Cauchy problem for $P$:
\begin{equation}\label{eq:5.4.12}
\begin{cases}
Pv = 0, & t\in[0,T], \\
\partial_t^{\,l} v(0,x) = f_l(x), & 0\le l \le m-1 .
\end{cases}
\end{equation}
The assumption that the characteristics of $P$ are simple implies that $P$ can
be factored as:
\begin{align}
P = L_m \cdots L_1 + R,\label{Lm}
\end{align}
where $R$ is a regularising operator in $X$, depending smoothly on $t$, and
\[
L_j = D_t - (\tau_j)_{\mathrm{op}}(t),
\]
where each $(\tau_j)_{\mathrm{op}}(t)$ is a classical pseudo-differential operator of order
one with principal symbol $\tau_j$, depending smoothly on $t$. It follows that the Cauchy problem~\eqref{eq:5.4.12} is well posed and that its solution is given by a composition of solution operators for the
problem~\eqref{AP1} with $A = (\tau_j)_{\mathrm{op}}$, which is a Cauchy problem for the operator $L_j$. 
\subsubsection{Well-posedness properties}
According to the above discussion and the composition formula for Fourier integral operators with complex-valued phase
functions, \cite{Melin-Sjostrand1976} we obtain the following result.

\begin{theorem}\label{XXX}
Let $P$ be a classical pseudo-differential operator of order $m$ of the form
\eqref{eq:5.4.10}. Assume that $P$ has simple characteristics $\tau_j,$ satisfying
$\operatorname{Im}\tau_j (t,x,\xi) \geq 0$ in $[0,T]\times (T^*X\setminus\{0\})$. Let $1\le p \le \infty$, and set
\[
\alpha_p \geq  \varkappa\left|\frac{1}{p}-\frac{1}{2}\right|.
\]
Let $\alpha \in \mathbb{R}$. Let the functions $f_l \in \mathcal{F}L^{p}_{\alpha+\alpha_p-l}$ be compactly supported for all
$0\le l \le m-1$. Then, for a fixed $t$, the solution $v$ of the Cauchy problem
\eqref{eq:5.4.12} satisfies
\[
v(t,\cdot)\in \mathcal{F}L^{p}_{\alpha}.
\]
\end{theorem}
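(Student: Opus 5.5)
The plan is to represent the fixed-time solution as a finite sum of Fourier integral operators with complex phases applied to the data, and then to use the $\mathcal{F}L^p$ boundedness of such operators (Theorem~1.1 of \cite{CardonaObengDentehOpoku}, i.e.\ the SSFC result of rank $\varkappa$ quoted before Theorem~\ref{Main:Theorem}). The argument has three steps.

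\textbf{Step 1: representation of the solution.} Using the factorization \eqref{Lm}, $P=L_m\cdots L_1+R$ with $L_j=D_t-(\tau_j)_{\mathrm{op}}(t)$, I would follow Ruzhansky's scheme \cite[Thm.~5.4.2]{Ruz2001}. Let $U_j(t,s)$ be the propagator of $L_j$. It exists for $t\ge s$ because $\operatorname{Im}\tau_j\ge0$, by the construction of Laptev--Safarov--Vassiliev recalled after \eqref{AP2}. It is an FIO of order zero with a complex phase of positive type.

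Solving $L_m\cdots L_1 v=0$ successively, i.e.\ $L_1v=w_1$, $L_2w_1=w_2$, and so on, via Duhamel's formula, one writes
\[
v(t,\cdot)=\sum_{l=0}^{m-1} Q_l(t) f_l + (\text{smoothing terms}).
\]
Each $Q_l(t)$ is a finite sum of compositions of the $U_j$ integrated in intermediate times, composed with pseudo-differential operators of order $-l$. These come from converting the data $\partial_t^k v(0)$ into data for the $w_k$: the Vandermonde-type system built from the distinct roots $\tau_j$ is inverted microlocally, which produces order $-l$ in front of $f_l$. By the composition calculus \cite{Melin-Sjostrand1976}, each $Q_l(t)$ is therefore an FIO of order $-l$ with complex phase of positive type. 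Its canonical relation is the composition of the flows of the $\tau_j$.

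The remainder $R$ is handled as a perturbation: it is regularising, so a Volterra iteration in $t$ only adds operators with smooth kernels, which are harmless.

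\textbf{Step 2: Fourier--Lebesgue estimate for each term.} Write
\[
Q_l(t)f_l = \bigl[Q_l(t)(1-\Delta)^{(l-\alpha_p-\alpha)/2}\bigr](1-\Delta)^{(\alpha+\alpha_p-l)/2}f_l .
\]
Compactness of the supports lets me localize to $\mathbb{R}^d$ with $x$-compactly supported symbols. The factor $(1-\Delta)^{(\alpha+\alpha_p-l)/2}f_l$ belongs to $\mathcal{F}L^p$ with norm equal to $\|f_l\|_{\mathcal{F}L^p_{\alpha+\alpha_p-l}}$.

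For the bracketed operator, pre-compose further with $(1-\Delta)^{\alpha/2}$ to measure the output in $\mathcal{F}L^p_\alpha$. This needs the commutation $\langle\xi\rangle^{\alpha}\,\widehat{Tg}$, which is fine because conjugating an FIO by Bessel potentials keeps it in $I^{\mu}$ with the same canonical relation. The net result is an FIO of order $-\alpha_p\le-\varkappa|1/p-1/2|$, which is bounded on $\mathcal{F}L^p$ for all $1\le p\le\infty$ by the cited theorem, provided its phase satisfies SSFC of rank $\varkappa$. Summing over $l$ gives $v(t,\cdot)\in\mathcal{F}L^p_\alpha$.

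\textbf{Main obstacle.} The delicate point is Step 1's geometric input: verifying that the composed propagators $Q_l(t)$ have phases of positive type satisfying SSFC of rank $\varkappa$ together with the nondegeneracy $\det\partial_y\partial_\eta(\operatorname{Re}\Phi+\tau\operatorname{Im}\Phi)\ne0$. For fixed small $t$ the phase is $\Phi=x\cdot\eta+t\,\tau(x,\eta)+O(t^2)$, so nondegeneracy holds near the identity. For general $t$ in $[0,T]$, I would first try to split $[0,T]$ into short subintervals and write $U(t)$ as a product of short-time propagators. Each factor is then bounded on the relevant space, and the orders add up to zero, except for the single loss of $\alpha_p$, which I would keep only on one factor while the others act at order zero.

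I suspect this last point needs care: order-zero FIOs are only bounded on $\mathcal{F}L^2$ in general. The alternative is to accept SSFC of rank $\varkappa$ for the global propagator as a standing hypothesis, interpreting $\varkappa$ in the statement this way. I expect the paper to proceed this way.
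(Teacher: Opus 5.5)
Your proposal follows the paper's proof essentially step for step. Both use the factorization $P=L_m\cdots L_1+R$ with propagators $U_j$ and a microlocal Vandermonde inversion from the distinct roots $\tau_j$, which gives $v(t,\cdot)=\sum_l Q_l(t)f_l$ with FIOs of order $-l$ (written $E_j(t)B_{jl}$ in the paper); both then conjugate by Bessel potentials to an FIO of order $-\alpha_p\le-\varkappa|1/p-1/2|$ and apply Theorem~1.1 of \cite{CardonaObengDentehOpoku}. Your resolution of the ``main obstacle'' is also exactly the paper's stance: the paper never verifies SSFC of rank $\varkappa$ or the nondegeneracy condition for the propagators and simply applies the cited theorem to them, so it effectively treats this as an implicit standing hypothesis.
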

\begin{proof}
    Note that the operator $P$ admits the factorisation in \eqref{Lm}. In consequence the solution $v$ can be written as
    \begin{equation}\label{FIO:representation}
        v(t,\cdot)=\sum_{j=1}^m\sum_{l=0}^{m-1}T_t^{jl}f_l
    \end{equation} where each $T_t^{jl}$ is a Fourier integral operator of order $-l.$ In consequence we have that 
    \begin{align*}
        \Vert v(t,\cdot) \Vert_{\mathcal FL^p_\alpha}&\leq \sum_{j=1}^m\sum_{l=0}^{m-1} \Vert T_t^{jl}f_l \Vert_{\mathcal FL^p_\alpha}\\
        &= \sum_{j=1}^m\sum_{l=0}^{m-1} \Vert T_t^{jl}(1-\Delta_x)^{\frac{-\alpha-\alpha_p+l}{2}}(1-\Delta_x)^{\frac{\alpha+\alpha_p-l}{2}}f_l \Vert_{\mathcal FL^p_\alpha}\\
        &= \sum_{j=1}^m\sum_{l=0}^{m-1} \Vert (1-\Delta_x)^{\frac{\alpha}{2}} T_t^{jl}(1-\Delta_x)^{\frac{-\alpha-\alpha_p+l}{2}}(1-\Delta_x)^{\frac{\alpha+\alpha_p-l}{2}}f_l \Vert_{\mathcal FL^p}.
    \end{align*} In view of the calculus of Fourier integral operators, their compositions with pseudo-differential operators on the left and on the right leave invariant the canonical relation. Then, using that each
    $$ (1-\Delta_x)^{\frac{\alpha}{2}} T_t^{jl}(1-\Delta_x)^{\frac{-\alpha-\alpha_p+l}{2}},$$ is a Fourier integral operator with complex phase, and of order
    \[
-\alpha_p \leq - \varkappa\left|\frac{1}{p}-\frac{1}{2}\right|.
\]
By applying Theorem~1.1 of \cite{CardonaObengDentehOpoku}, $(1-\Delta_x)^{\frac{\alpha}{2}} T_t^{jl}(1-\Delta_x)^{\frac{-\alpha-\alpha_p+l}{2}}$ is bounded from $\mathcal{F}L^p$ to itself, leading to the estimate
    \begin{align*}
        \Vert (1-\Delta_x)^{\frac{\alpha}{2}} T_t^{jl}(1-\Delta_x)^{\frac{-\alpha-\alpha_p+l}{2}}(1-\Delta_x)^{\frac{\alpha+\alpha_p-l}{2}}f_l \Vert_{\mathcal FL^p} &\lesssim \Vert (1-\Delta_x)^{\frac{\alpha+\alpha_p-l}{2}}f_l\Vert_{\mathcal FL^p}\\
        &=\Vert f_l\Vert_{\mathcal FL^p_{\alpha+\alpha_p-l}}.
    \end{align*} The proof is complete since  the functions $f_l \in \mathcal{F}L^{p}_{\alpha+\alpha_p-l}$ and then, for any $l,$ one has that $\Vert f_l\Vert_{\mathcal FL^p_{\alpha+\alpha_p-l}}<\infty.$ In consequence $ \Vert v(t,\cdot) \Vert_{\mathcal FL^p_\alpha}<\infty.$

In order to finish the proof we need to prove the existence of the representation in \eqref{FIO:representation}. This representation is well known for the real characteristics \cite[Page 108]{Ruzhansky2001}. We make here the extension for complex characteristics.  We proceed by induction. For $m=1,$ we need to solve the problem 
\begin{equation}\label{eq:5.4.12:m:1}
\begin{cases}
(D_t-\tau_1(x,D_x,t))v = 0, & t\in[0,T], \\
 v(0,x) = f_0(x), &
\end{cases} 
\end{equation}with $\tau_1(x,D_x,t)$ having a symbol with non-negative imaginary part. We are in the case of \eqref{AP1} and then the solution $v$ admits the representation $v(x,t)=U(t)f_0(x)$ with $f_0\in \mathcal{F}L^p_\alpha.$ Since $U(t)$ is, modulo $C^\infty,$ a Fourier integral operator of order zero with complex phase $\tau_1(x,\xi,t),$ and the proof of this case is complete.  

Now let us prove the general case. First let us make a remark. By the assumption of simple characteristics, the operator $P$ admits the factorization
\[
P = L_m \cdots L_1 + R,
\qquad
L_j = D_t - (\tau_j)_{\mathrm{op}}(t),
\]
with $R$ smoothing in $x$.
Since $R$ is regularizing, it suffices to construct a solution to
\[
L_m \cdots L_1 v = 0
\]
satisfying the Cauchy data, modulo smooth terms. For each $j$, let $U_j(t)$ denote the solution operator of the Cauchy problem
\[
\begin{cases}
L_j u = 0,\\
u(0)=g.
\end{cases}
\]
By the theory of Fourier integral operators with complex phase
(Melin--Sj\"ostrand \cite{melin2006fourier}), $U_j(t)$ is a Fourier integral operator of order $0,$ and satisfying
\[
\operatorname{Im}\varphi_j \ge 0.
\]
Define
\[
E_j(t)
:=
U_j(t)\circ U_{j-1}(t)\circ \cdots \circ U_1(t),
\qquad 1\le j\le m.
\]
We claim that any solution $v$ of
\[
L_m \cdots L_1 v = 0,
\]
can be written in the form
\[
v(t)
=
\sum_{j=1}^m E_j(t)\, g_j
\quad \text{mod } C^\infty,
\]
for suitable distributions $g_j$. This follows by induction on the number of factors $m.$
The case $m=1$  has been proved above.
Assuming the statement holds for $m-1$, one solves
\[
L_m w = 0
\quad \text{with} \quad
w = L_{m-1}\cdots L_1 v,
\]
and then applies the inductive hypothesis to recover $v$.
We now impose the initial conditions. 
Taking derivatives in time at $t=0$ yields
\[
\partial_t^l v(0)
=
\sum_{j=1}^m \partial_t^l E_j(0)\, g_j,
\qquad 0\le l\le m-1.
\]
In consequence, the previous is a system corresponding to the initial data $f_\ell.$
Taking principal symbols, we obtain the matrix
\[
M(\xi)
=
\bigl( ( i \tau_j(x,\xi) )^{l} \bigr)_
{0 \le l \le m-1 \atop 1 \le j \le m}.
\]
This is exactly a Vandermonde matrix:
\[
M =
\begin{pmatrix}
1 & 1 & \cdots & 1 \\
i\tau_1 & i\tau_2 & \cdots & i\tau_m \\
\vdots & \vdots & & \vdots \\
(i\tau_1)^{m-1} & (i\tau_2)^{m-1} & \cdots & (i\tau_m)^{m-1}
\end{pmatrix}.
\]Its determinant is
\[
\det M
=
\prod_{1 \le j < k \le m}
\bigl( i\tau_k - i\tau_j \bigr),
\]
which is microlocally non-vanishing, since the roots
$\tau_j$ are simple and pairwise distinct. 
Therefore, there exist pseudo-differential operators $B_{jl}$ of order $-l$
such that
\[
g_j
=
\sum_{l=0}^{m-1} B_{jl} f_l.
\]
Substituting into the representation of $v$, we obtain
\[
v(t)
=
\sum_{j=1}^m \sum_{l=0}^{m-1}
E_j(t)\, B_{jl}\, f_l.
\]
We now define
\[
T_t^{jl}
:=
E_j(t)\, B_{jl}.
\]
Each $E_j(t)$ is a Fourier integral operator of order $0$, and each
$B_{jl}$ is a pseudo-differential operator of order $-l$.
Hence, $T_t^{jl}$ is a Fourier integral operator of order $-l$ with complex phase.
This proves the claimed representation~\eqref{FIO:representation}.    
\end{proof}
The orders $\alpha_p$ are in general sharp, because in the case of strictly hyperbolic equations with $\operatorname{Im}\tau_j=0$, they can be shown to be
optimal by an application of the stationary phase method, under the condition that the projection from $\mathcal{C}_t$ (the canonical relation of the solution operator $U(t)$ of~\eqref{AP2})
to the base space has dimension $2d-1$ for at least one of the problems
\eqref{AP2} with $A=(\tau_j)_{\mathrm{op}}$. For more details we refer the reader to \cite{Nicola2010}.

\begin{theorem}
Let $P$ be as in Theorem \ref{XXX}. Let $1\le q < p\le \infty,$ and define
\[
\alpha_{pq}>d\bigg|\dfrac{1}{q}-\dfrac{1}{2}\bigg|+d\bigg(\dfrac{1}{q}-\dfrac{1}{p}\bigg).
\]
Then, for compactly supported Cauchy data
$f_l \in \mathcal{F}L^{p}_{\alpha+\alpha_{pq}-l}$, $0\le l \le m-1$, the fixed-time solution
$v(t,\cdot)$ of the Cauchy problem~\eqref{eq:5.4.12} satisfies
\[
v(t,\cdot)\in \mathcal{F}L^{q}_{\alpha}.
\]
\end{theorem}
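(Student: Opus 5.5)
The plan is to repeat the scheme of the proof of Theorem \ref{XXX}, with the $\mathcal{F}L^p\to\mathcal{F}L^p$ estimate of \cite{CardonaObengDentehOpoku} replaced by the $\mathcal{F}L^p\to\mathcal{F}L^q$ estimate in part 1 of Theorem \ref{Main:Theorem}. The first step is to use the representation \eqref{FIO:representation} established in the proof of Theorem \ref{XXX}:
\[
v(t,\cdot)=\sum_{j=1}^m\sum_{l=0}^{m-1}T_t^{jl}f_l ,
\]
where each $T_t^{jl}=E_j(t)B_{jl}$ is a Fourier integral operator with complex phase of positive type and of order $-l$. This representation depends only on the factorization \eqref{Lm} and the Vandermonde argument, not on the function spaces, so it can be reused verbatim. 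By the triangle inequality it then suffices to bound each term $\|T_t^{jl}f_l\|_{\mathcal{F}L^q_\alpha}$ separately.

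For a fixed pair $(j,l)$, I will conjugate by Bessel potentials exactly as before:
\[
\|T_t^{jl}f_l\|_{\mathcal{F}L^q_\alpha}
=\Big\|(1-\Delta_x)^{\frac{\alpha}{2}}T_t^{jl}(1-\Delta_x)^{\frac{-\alpha-\alpha_{pq}+l}{2}}\,(1-\Delta_x)^{\frac{\alpha+\alpha_{pq}-l}{2}}f_l\Big\|_{\mathcal{F}L^q}.
\]
The composition calculus for FIOs with complex phases (Melin--Sj\"ostrand) shows that composing with pseudo-differential operators on either side leaves the canonical relation unchanged. Hence
\[
S_t^{jl}:=(1-\Delta_x)^{\frac{\alpha}{2}}T_t^{jl}(1-\Delta_x)^{\frac{-\alpha-\alpha_{pq}+l}{2}}
\]
is an FIO with complex phase of order $\alpha-l-\alpha-\alpha_{pq}+l=-\alpha_{pq}$.

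Next I will apply part 1 of Theorem \ref{Main:Theorem} with the rank bound $\varkappa\le d$; the trivial rank $\varkappa=d$ is always admissible, since a codimension-$d$ fibration consists of points. Part 1 requires
\[
-\alpha_{pq}<-\varkappa\Big|\frac1q-\frac12\Big|-d\Big(\frac1q-\frac1p\Big),
\]
which follows from the hypothesis on $\alpha_{pq}$ together with $\varkappa\le d$. This yields
\[
\|S_t^{jl}g\|_{\mathcal{F}L^q}\lesssim\|g\|_{\mathcal{F}L^p}, \qquad g=(1-\Delta_x)^{\frac{\alpha+\alpha_{pq}-l}{2}}f_l .
\]
Since $\|g\|_{\mathcal{F}L^p}=\|f_l\|_{\mathcal{F}L^p_{\alpha+\alpha_{pq}-l}}<\infty$, summing over $j$ and $l$ gives $v(t,\cdot)\in\mathcal{F}L^q_\alpha$. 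Compact support of the data, together with the compact manifold setting, allows us to localize to coordinate charts via a partition of unity and to work on $\mathbb{R}^d$ with symbols compactly supported in $x$.

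The main obstacle is the endpoint $p=\infty$, which part 1 of Theorem \ref{Main:Theorem} formally excludes since it assumes $p<\infty$. I would handle it by going back to that proof. The only place $p$ enters is the H\"older step $\langle\eta\rangle^{-s}\colon L^p\to L^q$ on the Fourier side, which holds for $p=\infty$ as well whenever $sq>d$, that is, $s>d/q=d(1/q-1/p)$. The FIO part is the $\mathcal{F}L^q$-boundedness from \cite{CardonaObengDentehOpoku}, valid for $1\le q\le\infty$. So the same splitting, with an $\epsilon$-room afforded by the strict inequality on $\alpha_{pq}$, also covers $p=\infty$. A secondary point to check is that the localization and cutoffs do not destroy the positive-type phase structure; this follows from the same calculus used in Theorem \ref{XXX}.
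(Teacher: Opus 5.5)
Your proposal is correct and follows the paper's proof essentially step for step. Both arguments use the representation \eqref{FIO:representation}, conjugate by Bessel potentials to get an FIO of order $-\alpha_{pq}$, and then apply an $\mathcal{F}L^p\to\mathcal{F}L^q$ bound to that operator.

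Your version is in fact slightly more careful than the written proof in three places:
\begin{itemize}
\item You invoke part 1 of Theorem~\ref{Main:Theorem}, whereas the paper cites Theorem~1.1 of \cite{CardonaObengDentehOpoku}.
\item You justify the endpoint $p=\infty$, which part 1 excludes, via the H\"older step with $sq>d$.
\item You end with $\|f_l\|_{\mathcal{F}L^p_{\alpha+\alpha_{pq}-l}}$, where the paper writes $\mathcal{F}L^q$.
\end{itemize}

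The one overstatement is the claim that rank $\varkappa=d$ is ``always admissible''. Definition~\ref{SFC} also requires the non-degeneracy condition, and you and the paper both simply assume it holds for the propagators.
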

\begin{proof}
    This follows from Theorem~(\ref{XXX}) that the solution $v$ can be expressed as
    \begin{equation}
        v(t,\cdot)=\sum_{j=1}^m\sum_{l=0}^{m-1}T_t^{jl}f_l
    \end{equation} where each $T_t^{jl}$ is a Fourier integral operator of order $-l.$ In consequence we have that 
    \begin{align*}
        \Vert v(t,\cdot) \Vert_{\mathcal FL^q_\alpha}&\leq \sum_{j=1}^m\sum_{l=0}^{m-1} \Vert T_t^{jl}f_l \Vert_{\mathcal FL^q_\alpha}\\
        &= \sum_{j=1}^m\sum_{l=0}^{m-1} \Vert T_t^{jl}(1-\Delta_x)^{\frac{-\alpha-\alpha_{pq}+l}{2}}(1-\Delta_x)^{\frac{\alpha+\alpha_{pq}-l}{2}}f_l \Vert_{\mathcal FL^q_\alpha}\\
        &= \sum_{j=1}^m\sum_{l=0}^{m-1} \Vert (1-\Delta_x)^{\frac{\alpha}{2}} T_t^{jl}(1-\Delta_x)^{\frac{-\alpha-\alpha_{pq}+l}{2}}(1-\Delta_x)^{\frac{\alpha+\alpha_{pq}-l}{2}}f_l \Vert_{\mathcal FL^q}.
    \end{align*}
    In view of the calculus of Fourier integral operators, their compositions with pseudo-differential operators on the left and on the right leave invariant the canonical relation. Then, using that each
    $$ (1-\Delta_x)^{\frac{\alpha}{2}} T_t^{jl}(1-\Delta_x)^{\frac{-\alpha-\alpha_{pq}+l}{2}},$$ is a Fourier integral operator with complex phase, and of order \[
-\alpha_{pq}<-d\bigg|\dfrac{1}{q}-\dfrac{1}{2}\bigg|-d\bigg(\dfrac{1}{q}-\dfrac{1}{p}\bigg).
\]
By applying Theorem~1.1 of \cite{CardonaObengDentehOpoku}, $(1-\Delta_x)^{\frac{\alpha}{2}} T_t^{jl}(1-\Delta_x)^{\frac{-\alpha-\alpha_{pq}+l}{2}}$ is bounded from  $\mathcal{F}L^p$ into $\mathcal{F}L^q,$ leading to the estimate
    \begin{align*}
        \Vert (1-\Delta_x)^{\frac{\alpha}{2}} T_t^{jl}(1-\Delta_x)^{\frac{-\alpha-\alpha_{pq}+l}{2}}(1-\Delta_x)^{\frac{\alpha+\alpha_{pq}-l}{2}}f_l \Vert_{\mathcal FL^q} &\lesssim \Vert (1-\Delta_x)^{\frac{\alpha+\alpha_{pq}-l}{2}}f_l\Vert_{\mathcal FL^q}\\
        &=\Vert f_l\Vert_{\mathcal FL^q_{\alpha+\alpha_{pq}-l}}.
    \end{align*} The proof is complete since  the functions $f_l \in \mathcal{F}L^{q}_{\alpha+\alpha_{pq}-l}$ and then, for any $l,$ one has that $\Vert f_l\Vert_{\mathcal FL^q_{\alpha+\alpha_{pq}-l}}<\infty.$ In consequence $ \Vert v(t,\cdot) \Vert_{\mathcal FL^q_\alpha}<\infty.$
\end{proof}

\begin{remark}\label{Fredferick:Remark}
    Please notice that setting $\alpha=0$ and $r'=p,$  we have that  $r=p'$ and from Theorem~\ref{Main:Theorem}, $s=0$ whenever $p'=\frac{p}{p-1}$ and $q'=\frac{q}{q-1},$ indicating that $\mathscr{F}L^{r'}_{\alpha} \hookrightarrow B^{-s}_{p',q'}, $ which implies that the embedding $\mathscr{F}L^{p} \hookrightarrow B^{0}_{p',q'}$ holds. Similarly, $\mathscr{F}L^{r'}_{\alpha} \hookrightarrow F^{-s}_{p',q'}, $ implies that  $\mathscr{F}L^{p} \hookrightarrow F^{0}_{p',q'}$ whenever $p'=\frac{p}{p-1}$ and $q'=\frac{q}{q-1}$. Based on these assertions, it suffices to say that, if $v(t,\cdot)\in \mathcal{F}L^{p}$ then $v(t,\cdot)\in  B^{0}_{p',q'}$ and $ v(t,\cdot) \in F^{0}_{p',q'}$.
\end{remark}
    
\noindent {\bf Acknowledgement.}   Duv\'an Cardona has been  supported  by the FWO  Odysseus  1  grant  G.0H94.18N:  Analysis  and  Partial Differential Equations and by the Methusalem programme of the Ghent University Special Research Fund (BOF)
(Grant number 01M01021), by the FWO Fellowship
Grant No 1204824N and by the FWO Grant K183725N of the Belgian Research Foundation FWO. Frederick Opoku has been supported by Kwame Nkrumah University of Science and Technology, (KNUST) of Ghana.

\bibliographystyle{amsplain}

\end{document}